\newtheorem{theorem}{Theorem}[section]
\newtheorem{lemma}[theorem]{Lemma}
\newtheorem{prop}{Proposition}[section]
\newtheorem*{qcrit}{Q-Criterion}
\newtheorem{con}{Condition}
\newtheorem*{local}{Local Ergodic Theorem}
\newtheorem*{main}{Main Theorem}
\newtheorem*{abundance}{Abundance of least expanding points}
\newtheorem*{cs}{Chernov-Sinai ansatz}
\theoremstyle{definition}
\newtheorem{definition}[theorem]{Definition}
\theoremstyle{remark}
\numberwithin{equation}{section}
\begin{document}

\title[]{Certain systems of three falling balls satisfy the Chernov-Sinai Ansatz}

\author{Michael Tsiflakos}
\address{University of Vienna, Faculty of Mathematics.}
\curraddr{Oskar Morgensternplatz 1, 1090 Vienna.}
\email{michael.tsiflakos@univie.ac.at}
\thanks{\texttt{The author was supported by a Marietta-Blau and Marshall Plan Scholarship.}}

\thanks{\textsf{Acknowledgements} I wish to cordially thank Maciej P. Wojtkowski for his help in 
outlining 
the difficulties of the problem and his hospitality during my visit to Opole in October 2017. 
Further, my gratitude is expressed to my advisor Henk Bruin and P\'eter B\'alint, Nandor Sim\'anyi, 
Domokos Sz\'asz and Imre P\'eter T\'oth for many helpful discussions. }

\subjclass[2010]{37D50, 37J10}

\date{\today}



\begin{abstract}
The system of falling balls is an autonomous Hamiltonian system with a smooth 
invariant measure and non-zero Lyapunov exponents almost everywhere. Since almost three decades, 
the question of ergodicity is still open. The subject of this 
work is to contribute to the solution of the ergodicity conjecture for three 
falling balls with a specific mass ratio. The latter is executed in the following 
three points: First, we prove the Chernov-Sinai ansatz. Second, we prove that there 
is an abundance of least expanding points and, third, we explain that the proper alignment 
condition can still be verified and is actually pointwise equivalent to 
Chernov's transversality condition. It is of special interest, that for the aforementioned 
specific mass ratio, the configuration space can be unfolded to a billiard 
table, where the proper alignment condition holds.
\end{abstract}

\maketitle

\section{Introduction}
The system of falling balls was introduced by Wojtkowski \cite{W90a,W90b}. It describes the motion of 
$N$, $N \geq 2$, point masses, with positions $q_{1}, \ldots, q_{N}$, momenta $p_{1}, \ldots, p_{N}$
and masses $m_{1}, \ldots, m_{N}$, moving up and down a vertical line and colliding elastically with 
each other. The bottom particle collides elastically with a
rigid floor placed at position $q_{1} = 0$. For convenience, we will refer to 
the point particles as balls
The system is an autonomous Hamiltonian system, with  Hamiltonian given by the sum 
of the kinetic and linear potential energy of each ball. It possesses a smooth invariant measure with 
respect to the Hamiltonian flow and with respect to a
suitable Poincar\'e map $T$, describing the movement of the balls 
from one collision to the next. We denote the underlying Poincar\'e section for this map by 
$\mathcal{M}^+$ and its invariant measure by $\mu$. 
One aspect that makes the description of the dynamics 
cumbersome is the presence of singularities. These are codimension one manifolds in the phase space, 
on which the dynamics are not well-defined, in particular it has two different images. 
A point belongs to the singularity manifold, if its next collision is either between three balls or 
two balls with the floor. 

Dynamicists first tried to answer the question whether the system of $N$, $N \geq 2$, falling balls has 
$2N-2$ non-zero Lyapunov exponents on a positive measure set of the phase space. The exceptional two 
directions with a zero exponent are the direction of the flow and the directions transversal to the 
energy surface. 
Wojtkowski was able to prove, 
that two and three falling balls have non-zero Lyapunov exponents almost everywhere \cite{W90a}. 
He supplemented this result by proving that an arbitrary number of balls exposed to a certain 
family of non-linear potential fields have non-zero Lyapunov exponents almost everywhere \cite{W90b}. 
The most general result, regarding the linear potential field, 
is due to Sim\'anyi: For $N$, $N \geq 2$, falling balls, $\mu$-a.e. point $x 
\in \mathcal{M}^+$ has non-zero Lyapunov exponents \cite{S96}.    
In \cite{W98} Wojtkowski found an elegant way of proving the existence of non-zero Lyapunov 
exponents for a large class of falling balls systems. He first considers balls falling next to each 
other on a moving floor. By applying concrete stacking rules it is possible to obtain a 
variety of falling ball systems, such as the original one introduced in \cite{W90a} as a special case. 
The study of hyperbolicity is carried out by equivalently looking at the system of a particle falling 
in a wedge.

The underlying motivation of this work is to contribute to the solution of the long time open problem 
of ergodicity for three or more balls. For two balls, the system is already known to be ergodic 
\cite[p. 70 -72]{LW92}, provided $m_1 > m_2$. 
Since the system of three falling balls has non-zero Lyapunov exponents everywhere, the theory of 
Katok-Strelcyn \cite{KS86} yields, that the phase space partitions into at most countably many 
components on which the conditional smooth measure is ergodic. 
A reliable method to check the ergodicity of such systems is the local ergodic theorem 
\cite{CS87,KSSz90,LW92}. 
In the present work we will follow the local erogdic theorem version of Liverani and Wojtkoswki 
\cite{LW92}. 
For its application, the local ergodic theorem needs the following five conditions to hold, namely,
\begin{center}
\begin{enumerate}
 \item Chernov-Sinai ansatz,\\[-0.2cm]
 \item Non-contraction property,\\[-0.2cm]
 \item Continuity of Lagrangian subspaces,\\[-0.2cm]
 \item Regularity of singularity sets,\\[-0.2cm]
 \item Proper Alignment. 
\end{enumerate}
\end{center}
The validity of these conditions guarantees the existence of an open neighbourhood, around a point 
with non-vanishing Lyapunov exponents, that lies (mod 0) in one ergodic component. 
To prove, that there is only one ergodic component needs the validity 
of a transitivity argument. Namely, the set of points with a sufficient amount of expansion 
must have full measure and be arcwise connected. 
We will refer to this property as the abundance of least expanding points.
If the latter is true, one can build a chain of the aforementioned 
open neighbourhoods from any point with sufficient (or least) expansion to another. 
These neighbourhoods intersect pairwise on a subset of positive measure and, hence, there 
can only be one ergodic component. For three or more balls only condition 3 is known \cite{LW92} 
to be true. 

In their approach to ergodicity, Liverani and Wojtkowski introduced \cite{LW92} the property of 
(strict) unboundedness for a sequence of derivatives $(d_{T^nx}T)_{n \in \mathbb{N}}$. It roughly says, that 
the expansion (measured with respect to a special quadratic form) of any vector from the contracting 
cone field goes to infinity. In their terminology, it follows immediately that if 
$(d_{T^nx}T)_{n \in \mathbb{N}}$ is strictly unbounded everywhere then the Chernov-Sinai ansatz 
holds. Additionally, the abundance of least expanding points follows as a simple corollary.

The proof of the strict unboundedness property for every phase point is the main task of this work 
(see Section 2 for more details). 
For this, we will partially use techniques introduced in \cite{W98}, which allow us to study the
system of falling balls as a particle falling in a wedge. The results obtained 
from the latter analysis will be used to slightly modify the approach to strict unboundedness 
in \cite{LW92} for our needs.

Another important issue, which we clarify in a separate subsection is the state of the proper 
alignment condition (see Subsection \ref{state}). 
By some experts it has been wrongly assumed not to hold. We will thoroughly explain that 
this condition can still be verified and is, thus, an open problem. Further, we 
will use the strict unboundedness property to analyze in Subsection \ref{iterates}, how the 
set of not properly aligned points behaves under sufficiently large iterates.
We point out that for a 
specific mass ratio the configuration space of the falling balls systems can be unfolded to 
a billiard table where the proper alignment condition holds (see Subsection \ref{special}). 
The latter was discovered by Wojtkowski \cite{W16}.

On the same subject, Chernov formulated \cite{Ch93}, in the realm of semi-dispersing billiards, a 
transversality condition, which can serve as a substitute for the proper alignment condition. 
We will show, that in the framework of symplectic maps, Chernov's 
transversality condition is actually equivalent to the proper alignment condition (see 
Lemma \ref{equivalence}).

The paper is organized in the following way:

In Section 2 we briefly summarize the main results of this paper, which are the strict unboundedness for 
every orbit, the Chernov-Sinai ansatz and the abundance of least expanding points. It will also 
be shown, that the latter two results follow at once from the strict unboundedness property of 
every orbit.

In Section 3 we introduce the system of three falling balls.

In Section 4 we recall the standard method for studying Lyapunov exponents 
in Hamiltonian systems \cite{W91} and recall what has been done for the system of falling balls so far.

In Section 5 we explain the matter of ergodicity. It contains a detailed discussion of 
the local ergodic theorem, the proper alignment condition, Chernov's 
transversality condition and the abundance of least expanding points.

In Section 6 we begin with the first part of the proof of the strict unboundedness property. 
This section is completely written in the language of Liverani and Wojtkowski \cite{LW92} and 
explains how we use our new results in order to modify their proof of the unboundedness property.

In Section 7 we introduce the system of a particle falling in a three dimensional wedge 
from \cite{W98}. 
Its necessity stems from the fact, that for a special type of wedges this system is equivalent to the 
system of falling balls with particular masses. In the last subsection we will explain that the 
proper alignment condition is valid in these special wedges.

In Section 8 we utilize the results of Section 6 and 7 to complete the proof of the strict 
unboundedness property.

\section{Main results}
Denote by $\mathcal{M}^+$ the phase space, which is partitioned ($\operatorname{mod} 0$) into subsets 
$\mathcal{M}_i^+$, $i = 1, 2, 3$, where 
each subset describes the moment right after collision of balls $i-1$ and $i$. For $i - 1 = 0$, we have a collision with the floor, i.e. 
$q_1 = 0$. Let $T:\ \mathcal{M}^+ \circlearrowleft$ be the Poincar\'e map, describing the movement from one collision to the next. 
After applying Wojtkowski's convenient coordinate transformation $(q,p) \to (h,v) \to (\xi, \eta)$ (see \cite{W90a}), we get a contracting cone field
\begin{align}
\mathcal{C}(x) &= \{(\delta \xi, \delta \eta) \in \mathbb{R}^{3} \times \mathbb{R}^{3}:\ 
Q(\delta \xi, \delta \eta) > 0,\ \delta \xi_1 = 0,\ \delta \eta_1 = 0\} \cup \{\vec{0}\},\nonumber 
\end{align}
where $(\delta \xi, \delta \eta)$ denote the coordinates in tangent space. The cone field is defined by the quadratic form 
\begin{align}
Q(\delta \xi, \delta \eta) = \sum_{i = 1}^3 \delta \xi_i \delta \eta_i.\nonumber
\end{align}
Denote by $\overline{\mathcal{C}(x)}$ the closure of the cone $\mathcal{C}(x)$. The sequence $(d_{T^n x}T)_{n \in \mathbb{N}}$ is 
called unbounded, if
\begin{align}
\lim_{n \to +\infty} Q(d_{x}T^{n}v) = +\infty,\ \forall\ v \in \mathcal{C}(x) \setminus \{\vec{0}\}.\nonumber
\end{align}
and strictly unbounded, if
\begin{align}
\lim_{n \to +\infty} Q(d_{x}T^{n}v) = +\infty,\ \forall\ v \in \overline{\mathcal{C}(x)} \setminus \{\vec{0}\}.\nonumber
\end{align}

\begin{main}\label{main}
	For every $x \in \mathcal{M}^+$, we have 
	\begin{align}
	\lim_{n \to + \infty}Q(d_xT^{\pm n}(\delta \xi, \delta \eta)) = \pm \infty,\nonumber
	\end{align}
	for all $(\delta \xi, \delta \eta) \in \overline{\mathcal{C}(x)} \setminus \{\vec{0}\}$.
\end{main}
We will formulate the proof of the Main Theorem only for the positive orbit $(d_{T^n x}T)_{n \in \mathbb{
N}}$, since the proof for $(d_{T^n x}T)_{n \in \mathbb{Z}^-}$ is exactly the same.

The singularity manifold on which $T$ resp. $T^{-1}$ is not well-defined is given by $\mathcal{S}^+$ 
resp. $\mathcal{S}^-$. Let $\mu_{|_{\mathcal{S}^+}}$ resp. $\mu_{|_{\mathcal{S}^-}}$ be the 
restriction to $\mathcal{S}^+$ resp. $\mathcal{S}^-$ of the smooth $T$-invariant measure $\mu$.

The validity of the Main Theorem immediately establishes the Chernov-Sinai ansatz, which is one of the 
conditions of the Local Ergodic Theorem.
\begin{cs}
	For $\mu_{|_{\mathcal{S}^{\pm}}}$ -a.e. $x \in \mathcal{S}^{\pm}$, we have
	\begin{align}
	\lim_{n \to +\infty} Q(d_xT^{\mp n}(\delta \xi, \delta \eta)) = \mp\infty,\nonumber
	\end{align}
	for all $(\delta \xi, \delta \eta) \in \overline{\mathcal{C}(x)} \setminus \{\vec{0}\}$.
\end{cs}
The least expansion coefficient $\sigma$, for $n \geq 1$ and $x \in \mathcal{M}^+$, is defined as
\begin{align}
\sigma(d_{x}T^n) = \inf_{v \in \mathcal{C}(x)} \sqrt{\frac{Q(d_{x}T^nv)}{Q(v)}}.\nonumber
\end{align}
A point $x \in \mathcal{M}^+$, is called least expanding, if there 
exists $n = n(x) \geq 1$, such that $\sigma(d_{x}T^n) > 1$.

The last result is the abundance of least expanding points. 
It can be described as a transitivity argument, which acts in specifying the size of 
the ergodicity domain in phase space by connecting open neighbourhoods, which lie $(\operatorname{mod} 0)$ in one ergodic component. 
\begin{abundance}
The set of least expanding points has full measure and is arcwise connected.
\end{abundance}
As the Chernov-Sinai ansatz, the abundance of least expanding points follow at once from the 
Main Theorem, since $(d_{T^nx}T)_{n\in\mathbb{N}}$ is strictly unbounded if and only if 
$\lim_{n \to \infty} \sigma(d_xT^n) = \infty$ (see \cite[Theorem 6.8]{LW92}).

\section{The system of three falling balls}
Let $q_{i} = q_{i}(t)$ be the position, $p_{i} = p_{i}(t)$ the momentum and $v_i = v_i(t)$ the velocity 
of the $i$-th ball. The balls are aligned on top of each other and are therefore confined to 
\begin{align}
\mathcal{N}(q,p) = \{(q,p) \in \mathbb{R}^3 \times \mathbb{R}^3:\ 0 \leq q_1 \leq q_2 \leq q_3\}.
\nonumber
\end{align} 
The momenta and the velocities are related by $p_i = m_iv_i$. We 
assume that the masses $m_{i}$ satisfy $m_{1} > m_{2} \geq m_{3}$. The movements of the balls are a 
result of a linear potential field and their kinetic energies. The total energy of the system is given 
by the Hamiltonian function
\begin{align}
H(q,p) = \sum_{i = 1}^{3} \frac{p_{i}^{2}}{2m_{i}} + m_{i}q_{i}.\nonumber
\end{align}

The Hamiltonian equations are
\begin{align}\label{equations}
\begin{array}{ccc}
\dot{q_{i}} & = & \dfrac{p_{i}}{m_{i}},\\[0.3cm]
\dot{p_{i}} & = & -m_{i}.
\end{array}
\end{align}

The dots indicate differentiation with respect to time $t$ and the Hamiltonian vector field on the 
right hand side will be denoted as $X_{H}(q,p)$. The solutions to these equations are
\begin{align}\label{equation2}
\begin{array}{ccl}
q_{i}(t) & = & -\dfrac{t^{2}}{2} + t\dfrac{p_{i}(0)}{m_{i}} + q_{i}(0),\\[0.3cm]
p_{i}(t) & = & -tm_{i} + p_{i}(0),
\end{array}
\end{align}
which form parabolas in $(t, q_{i}(t)) \subset \mathbb{R} \times \mathbb{R}_{+}$. It is clear from the 
choice of the linear potential field, that the acceleration of each ball points downwards and, thus, 
these parabolas cannot escape to infinity. Hence, for every initial condition $(q,p)$ the balls go 
through every collision in finite time and, thus, every collision happens infinitely often. The energy 
manifold $E_c$ and its tangent space $\mathcal{T}E_c$ are given by 
\begin{align} 
E_c &= \{(q,p) \in \mathbb{R}_{+}^{3} \times \mathbb{R}^{3}:\ H(q,p) =  \sum_{i = 1}^{3} 
\frac{p_{i}^{2}}{2m_{i}} + m_{i}q_{i} = c\},\nonumber\\
\mathcal{T}_{(q,p)}E_c &= \{(\delta q, \delta p) \in \mathbb{R}^{3} \times \mathbb{R}^{3}:\   
d_{(q,p)}H(\delta q, \delta p) 
= \sum_{i=1}^{3}\dfrac{p_{i}\delta p_{i}}{m_{i}} + m_{i}\delta q_{i} = 0\}.\nonumber
\end{align}
Including the restriction of the balls positions amounts to $E_c \cap \mathcal{N}(q,p)$.

The Hamiltonian vector field (\ref{equations}) gives rise to the Hamiltonian flow 
\begin{align}
 \phi:\ &\mathbb{R} \times E_c \cap \mathcal{N}(q,p) \to E_c \cap \mathcal{N}(\phi(t,(q,p)), \nonumber\\
 &(t,(q,p)) \mapsto \phi(t,(q,p)).\nonumber 
\end{align}
For convenience, the image will also be written with the time variable as superscript, i.e. 
$\phi(t,(q,p)) = \phi^{t}(q,p)$.\\
The standard symplectic form $\omega = \sum_{i = 1}^3 dq_i \wedge dp_i$ induces the symplectic volume element 
$\Omega = \bigwedge_{i = 1}^3 dq_i \wedge dp_i$. We restrict it to $\iota (u) \Omega$, 
where $u$ is any vector satisfying $dH(u)=1$ and $\iota$ is the interior derivative. Since the flow preserves 
the standard symplectic form, it preserves the volume element and, hence, the Liouville measure $\nu$ on $E_c \cap \mathcal{N}(q,p)$ obtained from it.
We define the Poincar\'e section, which describes the states right after a collision as
$\mathcal{M}^{+} = \mathcal{M}_{1}^{+} \cup \mathcal{M}_{2}^{+} \cup \mathcal{M}_{3}^{+}$, with
\begin{align}
&\mathcal{M}_{1}^{+} := \{(q,p) \in E_c \cap \mathcal{N}(q,p): q_{1} = 0,\ p_{1}/m_{1} \geq 0\},
\nonumber\\
&\mathcal{M}_{i}^{+} := \{(q,p) \in E_c \cap \mathcal{N}(q,p): q_{i-1} = q_{i},\ 
p_{i-1}/m_{i-1} \leq p_{i}/m_{i}\},\ i = 2,3.\nonumber
\end{align}

In the same way we define the set of states right before collision $\mathcal{M}^{-} = 
\mathcal{M}_{1}^{-} \cup \mathcal{M}_{2}^{-} \cup \mathcal{M}_{3}^{-}$, by
\begin{align}
&\mathcal{M}_{1}^{-} := \{(q,p) \in E_c \cap \mathcal{N}(q,p): q_{1} = 0,\ p_{1}/m_{1} < 0\},\nonumber\\
&\mathcal{M}_{i}^{-} := \{(q,p) \in E_c \cap \mathcal{N}(q,p): q_{i-1} = q_{i},\ 
p_{i-1}/m_{i-1} > p_{i}/m_{i}\},\ i = 2,3.\nonumber
\end{align}

The '+' resp. '-' superscript refer to the states right after resp. before collision. 
The system of falling balls is considered as a hard ball system with fully elastic collisions. During a 
collision of the balls $i$ and $i+1$ the momenta resp. velocities change according to
\begin{align}\label{collisionqp}
\begin{array}{ccc}
p_{i}^{+} & = & \gamma_{i} p_{i}^{-} + (1 + \gamma_{i})p_{i+1}^{-},\\
p_{i+1}^{+} & = & (1 - \gamma_{i})p_{i}^{-} - \gamma_{i} p_{i+1}^{-},\\[0.1cm]
v_{i}^{+} & = & \gamma_{i} v_{i}^{-} + (1 - \gamma_{i})v_{i+1}^{-},\\
v_{i+1}^{+} & = & (1 + \gamma_{i})v_{i}^{-} - \gamma_{i} v_{i+1}^{-},
\end{array}
\end{align}
where $\gamma_{i} = (m_{i} - m_{i+1})/(m_{i} + m_{i+1})$, $i = 1, 2$, and when the bottom particle 
collides with the floor the sign of its momentum is simply reversed
\begin{align}\label{v1p1}
\begin{array}{ccc}
p_{1}^{+} = -p_{1}^{-},\\
v_1^+ = -v_1^-.
\end{array}
\end{align}

These collision laws are described by the linear collision map 
\begin{align}
\begin{array}{rl}
 \Phi_{i-1,i}:\ &\mathcal{M}^{-} \to \mathcal{M}^{+},\nonumber\\[0.2cm]
 &(q,p^{-}) \mapsto (q,p^{+}).\nonumber
\end{array}
\end{align}

We will write $\Phi$ if we do not want to refer to any specific collision. 
Let $\tau: M \to \mathbb{R}_{+}$ be the first return time to $\mathcal{M}^{-}$. 
We define the Poincar\'e map as 
\begin{align}
T:\  &\mathcal{M}^{+} \to \mathcal{M}^{+},\nonumber \\
&(q,p) \mapsto \Phi \circ \phi^{\tau(q,p)}(q,p).\nonumber
\end{align}

$T$ is the collision map, that maps from one collision to the next. By restricting the volume form on 
$E_c \cap \mathcal{N}(q,p)$ 
with respect to the direction of the flow we obtain the volume form $\iota(X_H) \iota (u) \Omega$. 
This volume form defines a smooth measure $\mu$ on $\mathcal{M}^+$, which is $T$-invariant.
Our dynamical system can be stated as the triple $(\mathcal{M}^{+}, T, \mu)$. 
Each $\mathcal{M}_{i}^{+}$ and $\mathcal{M}_{i}^{-}$ further partitions ($\operatorname{mod} 0$) into
\begin{align}
&\mathcal{M}_{i,j}^{+} = \{x \in \mathcal{M}_{i}^{+}:\ Tx \in \mathcal{M}_{j}^{+}\},\ 
j \in \{1,2,3\},\ j \neq i,\nonumber\\
&\mathcal{M}_{i,j}^{-} = \{x \in \mathcal{M}_{i}^{-}:\ 
\Phi_{j-1,j} \circ T^{-1} \circ \Phi_{i-1,i} x \in \mathcal{M}_{j}^{-}\},\ 
j \in \{1,2,3\},\ j \neq i.\nonumber
\end{align}

It can be calculated, that $\mu(\mathcal{M}_{i,j}^{\pm}) > 0$.   
The system of falling balls possesses codimension one singularity manifolds
\begin{align}
 &\mathcal{S}_{1,2}^{+} = \{(q,p) \in \mathcal{M}_1^+:\ \phi^{\tau(q,p)}(q,p) \in 
 \mathcal{M}_2^- \cap \mathcal{M}_3^-\},\nonumber\\
 &\mathcal{S}_{1,2}^{-} = \{(q,p) \in \mathcal{M}_2^+ \cap \mathcal{M}_3^+:\ 
 T(q,p) \in  \mathcal{M}_1^+\},\nonumber\\
 &\mathcal{S}_{3,1}^{+} = \{(q,p) \in \mathcal{M}_3^+:\ \phi^{\tau(q,p)}(q,p) \in 
 \mathcal{M}_1^- \cap \mathcal{M}_2^-\},\nonumber\\
 &\mathcal{S}_{3,1}^{-} = \{(q,p) \in \mathcal{M}_1^+ \cap \mathcal{M}_2^+:\ 
 T(q,p) \in  \mathcal{M}_3^+\}.\nonumber
\end{align}

The states in $\mathcal{S}_{1,2}^{+}, \mathcal{S}_{1,2}^{-}$ face a triple collision next, while the
states in $\mathcal{S}_{3,1}^{+}, \mathcal{S}_{3,1}^{-}$ experience a collision of the lower two balls 
with the floor next. 
The maps $T$ resp. $T^{-1}$ are not well-defined on the sets 
$\mathcal{S}_{1,2}^{+}, \mathcal{S}_{3,1}^{+}$ resp. $\mathcal{S}_{1,2}^{-}, \mathcal{S}_{3,1}^{-}$ , 
because they have two different images. This happens because we can approximate,
say, a triple collision in two ways: Once by letting ball one and two collide an instant before ball 
two and three and vice versa. When the trajectory hits a singularity, we will continue the system 
on both branches separately. In this way, the results obtained in this work hold for every point. 

We abbreviate 
\begin{align}
&\mathcal{S}^{\pm} = \mathcal{S}_{1,2}^{\pm} \cup \mathcal{S}_{3,1}^{\pm},\nonumber\\
&\mathcal{S}_n^{\pm} = \mathcal{S}^{\pm} \cup T^{\mp 1}\mathcal{S}^{\pm} \cup \ldots 
\cup T^{\mp (n-1)}\mathcal{S}^{\pm}.\nonumber
\end{align}

\section{Lyapunov exponents}
We subject our system to two well-discussed coordinate transformations $(q,p) \to (h,v) \to (\xi, \eta)$ 
introduced in \cite{W90a}. The first one is given by 
\begin{align}
h_{i} &= \dfrac{p_{i}^{2}}{2m_{i}} + m_{i}q_{i},\nonumber\\
v_{i} &= \dfrac{p_{i}}{m_{i}},\nonumber
\end{align}
while the second one is a linear coordinate transformation
\begin{align}
\xi_i &= A^{-1}h_i,\nonumber\\
\eta_i &= A^{T}v_i,\nonumber
\end{align}
where $A$ is a symplectic matrix depending only on the masses $m_i$ \cite[p. 520]{W90a}. The energy 
manifold and its tangent space take the form
\begin{align}
E_c &= \{(\xi, \eta) \in \mathbb{R}^{3} \times \mathbb{R}^{3}:\ H(\xi,\eta) = \xi_{1} = c\},\nonumber\\
\mathcal{T}E_c &= \{(\delta \xi, \delta \eta) \in \mathbb{R}^{3} \times \mathbb{R}^{3}:\  dH(\xi,\eta) = 
\delta \xi_{1} = 0\}.\nonumber
\end{align}

The Hamiltonian vector field $X_{H}(\xi,\eta) = (0,0,0,-1,0,0)$ becomes constant. 
In these coordinates, the derivative of the flow $d\phi^t$ equals the identity map. 
Thus, only the derivatives of the collision maps $d\Phi_{i-1,i}$ are relevant to the 
dynamics in tangent space. In these coordinates the collision maps are given by 
\begin{align}
d\Phi_{0,1} = \nonumber
\begin{pmatrix}
\operatorname{id}_3 & 0\\
B & \operatorname{id}_3
\end{pmatrix},\
d\Phi_{1,2} = \nonumber
 \begin{pmatrix}
M_1 & U_1\\
0 & M_1^T
\end{pmatrix},\
d\Phi_{2,3} = \nonumber
\begin{pmatrix}
M_2 & U_2\\
0 & M_2^T
\end{pmatrix}.
\end{align}
where 
\begin{align}
B &= 
 \begin{pmatrix}
  1 & 0 & 0\\
  0 & \beta & 0\\
  0 & 0 & 0
 \end{pmatrix},\
U_1 = 
 \begin{pmatrix}
  0 & 0 & 0\\
  0 & \alpha_1 & 0\\
  0 & 0 & 0
 \end{pmatrix},\ 
U_2 = 
 \begin{pmatrix}
  0 & 0 & 0\\
  0 & 0 & 0\\
  0 & 0 & \alpha_2
 \end{pmatrix},\nonumber\\
\operatorname{id}_3 &=
  \begin{pmatrix}
 1 & 0 & 0\\
 0 & 1 & 0\\
 0 & 0 & 1
 \end{pmatrix},\
 M_1 =
  \begin{pmatrix}
 1 & 0 & 0\\
 0 & -1 & 1+\gamma_{1}\\
 0 & 0 & 1
 \end{pmatrix},\
 M_2 =
  \begin{pmatrix}
 1 & 0 & 0\\
 0 & 1 & 0\\
 0 & 1 - \gamma_{2} & -1
 \end{pmatrix}.\nonumber
\end{align}

The terms in the matrices are given by
\begin{align}\label{alpha}
\beta = -\dfrac{2}{m_{1}v_{1}^{-}},\quad \alpha_{i} = 
\dfrac{2m_{i}m_{i+1}(m_{i} - m_{i+1})(v_{i}^{-} - v_{i+1}^{-})}{(m_{i} + m_{i+1})^{2}}.
\end{align}

A Lagrangian subspace $V$ is a linear space of maximal dimension on which the symplectic form vanishes. 
In general, every vector $v \in \mathbb{R}^6$ can be uniquely decomposed by a pair of two 
given transversal Lagrangrian subspaces $(V_1$, $V_2)$, i.e. $v = v_1 + v_2$, $v_{i} \in V_{i}$, 
$i = 1,2$. For a pair of transversal Lagrangian subspaces $(V_1, V_2)$ we can define 
a quadratic form $Q$ by
\begin{align}
Q:\ &\mathbb{R}^{6} \to \mathbb{R}\nonumber\\
&v \mapsto Q(v) = \omega(v_{1},v_{2})\nonumber
\end{align}

The canonical pair of transversal Lagrangian subspaces in $\mathbb{R}^{6}$ is given by
\begin{align}
W_{1} = \{(\delta \xi, \delta \eta) \in \mathbb{R}^{3} \times \mathbb{R}^{3}:\ 
\delta \eta_{1} = \delta \eta_{2} = \delta \eta_{3} = 0\},\nonumber\\
W_{2} = \{(\delta \xi, \delta \eta) \in \mathbb{R}^{3} \times \mathbb{R}^{3}:\ 
\delta \xi_{1} = \delta \xi_{2} = \delta \xi_{3} = 0\}.\nonumber 
\end{align}

Restricting both to $\mathcal{T}E$ and excluding the direction of the flow gives
\begin{align}
\begin{array}{rl}\label{constant}
&L_{1} = \{(\delta \xi, \delta \eta) \in \mathbb{R}^{3} \times \mathbb{R}^{3}:\ 
\delta \xi_{1} = 0,\ \delta \eta_{i} = 0,\ i=1,2,3\},\\[0.2cm]
&L_{2} = \{(\delta \xi, \delta \eta) \in \mathbb{R}^{3} \times \mathbb{R}^{3}:\ 
\delta \eta_{1} = 0,\ \delta \xi_{i} = 0,\ i=1,2,3\}.
\end{array}
\end{align}

For the pair $(L_1, L_2)$, the quadratic form $Q$ becomes the Euclidean inner product
\begin{align}
 Q(\delta \xi, \delta \eta) = \langle \delta \xi, \delta \eta \rangle.\nonumber
\end{align}

We see immediately that $Q(L_{i}) = 0$. Also, $Q$ is continuous and homogeneous of degree two.
Using the quadratic form $Q$ we can define the open cones
\begin{align}
\mathcal{C}(x) &= \{(\delta \xi, \delta \eta) \in L_1 \oplus L_2:\ 
Q(\delta \xi, \delta \eta) > 0\} \cup \{\vec{0}\},\nonumber\\
\mathcal{C}^{\prime}(x) &= \{(\delta \xi, \delta \eta) \in L_1 \oplus L_2:\ 
Q(\delta \xi, \delta \eta) < 0\} \cup \{\vec{0}\}.\nonumber
\end{align}

Denote by $\overline{\mathcal{C}(x)}$ the closure of the cone $\mathcal{C}(x)$.
\begin{definition}\label{Defq}
1. The cone field $\{\mathcal{C}(x),\ x \in \mathcal{M}^{+}\}$, is called 
 invariant for $x \in \mathcal{M}^+$, if 
 \begin{align}
 d_xT \overline{\mathcal{C}(x)} \subseteq \overline{\mathcal{C}(Tx)},\nonumber
 \end{align}
2. The cone field $\{\mathcal{C}(x),\ x \in \mathcal{M}^{+}\}$, is called 
 eventually strictly invariant for $x \in \mathcal{M}^+$, if there exists
a $k \geq 1$, such that 
\begin{align}
 d_xT^k \overline{\mathcal{C}(x)} \subset \mathcal{C}(T^kx).\nonumber
\end{align}
3. The monodromy map $d_xT$ is called Q-monotone for $x \in \mathcal{M}^{+}$, if 
\begin{align}
Q(d_xT(\delta \xi, \delta \eta)) \geq Q(\delta \xi, \delta \eta), \nonumber
\end{align}
for all $(\delta \xi, \delta \eta) \in L_1 \oplus L_2$.\\
4. The monodromy map $d_xT$ is called eventually strictly Q-monotone for $x \in \mathcal{M}^{+}$, 
if there exists a $k \geq 1$, such that 
\begin{align}
Q(d_xT^k(\delta \xi, \delta \eta)) > Q(\delta \xi, \delta \eta), \nonumber
\end{align}
for all $(\delta \xi, \delta \eta) \in L_1 \oplus L_2 \setminus \{\vec{0}\}$.
\end{definition}

Statement \textit{1.} resp. \textit{2.} is equivalent to statement \textit{3.} resp. \textit{4.} 
(see e.g. \cite[Theorem 4.1]{LW92}). The 
following lemma establishes eventual strict Q-monotonicity by using only the 
evolution of the Lagrangian subspaces $L_1$ and $L_2$
(see e.g. \cite[Lemma 2]{W90a}).
\begin{lemma}\label{lem}
The monodromy map $d_xT$ is eventually strictly Q-monotone for 
$x \in \mathcal{M}^{+}$, if there exists $N \geq 1$, such that for all 
$(\delta \xi, 0) \in L_1$ and $(0, \delta \eta) \in L_2$,
\begin{align}
Q(d_xT^N (\delta \xi, 0)) > 0\ \text{ and }\ Q(d_xT^N (0, \delta \eta)) > 0.\nonumber
\end{align}
\end{lemma}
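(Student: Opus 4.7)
The plan is to reduce the lemma to a linear-algebra identity about $F := d_xT^N$ and apply a Schur complement analysis. First, relative to the Lagrangian splitting $L_1 \oplus L_2$, I would write $F$ as a $2 \times 2$ block matrix
\begin{equation*}
F = \begin{pmatrix} P & Q' \\ R & S \end{pmatrix},
\end{equation*}
where the blocks are $2\times 2$ (since $L_1$ and $L_2$ are both $2$-dimensional after removing the energy and flow directions). Symplecticity of $F$ yields six block identities: $P^TR = R^TP$, $Q'^TS = S^TQ'$, $P^TS - R^TQ' = I$, together with $PQ'^T = Q'P^T$, $RS^T = SR^T$, $PS^T - Q'R^T = I$. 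A direct computation then shows that the bilinear form $Q\circ F - Q$ is represented on $L_1 \oplus L_2$ by the matrix
\begin{equation*}
\mathcal{M}_F = \begin{pmatrix} P^TR & R^TQ' \\ Q'^TR & Q'^TS \end{pmatrix}.
\end{equation*}

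Next I would exploit the fact, built into the setup, that $d_xT$ is $Q$-monotone: by Definition \ref{Defq} the invariance of the cone field $\{\mathcal{C}(x)\}$ for the falling balls system is equivalent to $Q(d_xT v) \geq Q(v)$ for every $v$, and the same then holds for every iterate. Consequently $\mathcal{M}_F$ is positive semidefinite. The hypothesis of the lemma translates precisely into strict positive-definiteness of the diagonal blocks: $\Sigma_1 := P^TR > 0$ and $\Sigma_2 := Q'^TS > 0$. Since these are $2\times 2$, this forces $P$, $R$, $Q'$ and $S$ each to be invertible.

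The decisive computation is to take the Schur complement of $\Sigma_1$ in $\mathcal{M}_F$. Using $(P^TR)^{-1} = R^{-1}P^{-T}$ and the symplectic identity $P^TS - R^TQ' = I$, the Schur complement collapses dramatically to
\begin{equation*}
\Sigma_2 - (Q'^TR)\,\Sigma_1^{-1}\,(R^TQ') = Q'^T P^{-T}.
\end{equation*}
The transposed symplectic identity $PQ'^T = Q'P^T$ then gives $Q'^T P^{-T} = P^{-1}Q'$ and simultaneously shows this matrix is symmetric. From $\mathcal{M}_F \geq 0$ together with $\Sigma_1 > 0$, the Schur complement $P^{-1}Q'$ is positive semidefinite; from invertibility of $P$ and $Q'$, it is invertible; hence it is positive definite. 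Therefore $\mathcal{M}_F > 0$, which means $d_xT^N$ itself is strictly $Q$-monotone, and the lemma holds with $k = N$.

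The main obstacle I anticipate is recognizing that the Schur complement simplifies to the clean expression $P^{-1}Q'$: this is not a priori obvious, and it relies on invoking two of the six symplectic identities in exactly the right order, after which its symmetry must also be extracted from the block relations. Once this algebraic identity is established, positive definiteness follows immediately by combining the $Q$-monotonicity of the system with the invertibility already forced by the hypothesis, and no further dynamical input is required.
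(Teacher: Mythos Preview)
Your argument is correct. The paper does not give its own proof of this lemma; it simply cites \cite[Lemma 2]{W90a}, so there is no in-paper argument to compare against. Your block-matrix and Schur-complement computation is a clean way to obtain the result, and the key algebraic collapse
\[
\Sigma_2 - (Q'^TR)\,\Sigma_1^{-1}\,(R^TQ') \;=\; Q'^T P^{-T} \;=\; P^{-1}Q'
\]
via the symplectic identities $P^TS - R^TQ' = I$ and $PQ'^T = Q'P^T$ is exactly right.

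Two small remarks. First, the invertibility of $P,R,Q',S$ follows from $\det(P^TR)\neq 0$ and $\det(Q'^TS)\neq 0$ alone; the blocks being $2\times 2$ plays no role, so you can drop that clause. Second, your use of the background $Q$-monotonicity of $d_xT$ is not a convenience but a genuine necessity: the abstract implication ``$\Sigma_1>0$ and $\Sigma_2>0$ $\Rightarrow$ $\mathcal{M}_F>0$'' is false for symplectic $F$ without it. In one degree of freedom, $F=\bigl(\begin{smallmatrix}1&-2\\1&-1\end{smallmatrix}\bigr)$ has $pr=1>0$, $qs=2>0$, yet $\mathcal{M}_F=\bigl(\begin{smallmatrix}1&-2\\-2&2\end{smallmatrix}\bigr)$ is indefinite. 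So it is worth stating explicitly that the lemma, as phrased in the paper, tacitly relies on the $Q$-monotonicity of $d_xT$ established in \cite{W90a} for the falling balls map; you already do this, and it is the right move.
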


In order to get nonzero Lyapunov exponents Wojtkowski introduced \cite[p. 516]{W90a} a 
criterion, which links eventual strict Q-monotonicity to nonuniform hyperbolic 
behaviour
\begin{qcrit}
If $d_xT$ is eventually strictly Q-monotone for $\mu$-a.e. $x \in \mathcal{M}^{+}$, then 
all Lyapunov exponents, except for two\footnote{The exceptional directions with zero Lyapunov exponents 
are the direction of the flow and the ones contained in the subset $\{v:\ dH(v) \neq 0\}$.}, are non-zero.
\end{qcrit}

For $N$, $N \geq 2$, balls, Wojtkowski proved \cite{W90a}, that $d_xT$ is Q-monotone for every point in 
$\mathcal{M}^+$. Wojtkowski strengthened this statement in the case of three balls with upward decreasing masses, 
by proving eventual strict Q-monotonicity for every
\footnote{Even though Proposition 3 in \cite{W90a} is stated for almost every point 
$x \in \mathcal{M}^+$, the reader will discover, when carefully reading the proof, that it 
actually holds for every $x \in \mathcal{M}^+$.} point in $\mathcal{M}^+$ 
\cite[Proposition 3]{W90a}. 
Afterwards Sim\'anyi proved \cite{S96}, that $d_xT$ is eventually strictly Q-monotone for 
$\mu$-a.e. $x \in \mathcal{M}^+$ and an arbitrary number of balls.\\ 
We close this subsection by formulating the (strict) unboundedness property and the least expansion coefficient, which will be used to establish criteria for ergodicity.\\ 
The least expansion coefficient $\sigma$, for $n \geq 1$ and $x \in \mathcal{M}^+$, is defined as
\begin{align}\label{leastexp}
\sigma(d_{x}T^n) = \inf_{v \in \mathcal{C}(x)} \sqrt{\frac{Q(d_{x}T^nv)}{Q(v)}}.
\end{align}

\begin{definition}\label{unboundedness}
1. The sequence $(d_{T^n x}T)_{n \in \mathbb{N}}$ is called unbounded, if
\begin{align}
\lim_{n \to +\infty} Q(d_{x}T^{n}v) = +\infty,\ \forall\ v \in \mathcal{C}(x) \setminus \{\vec{0}\}.\nonumber
\end{align}

2. The sequence $(d_{T^n x}T)_{n \in \mathbb{N}}$ is called strictly unbounded, if
\begin{align}
\lim_{n \to +\infty} Q(d_{x}T^{n}v) = +\infty,\ \forall\ v \in \overline{\mathcal{C}(x)} 
\setminus \{\vec{0}\}.\nonumber
\end{align}
\end{definition}
The least expansion coefficient and the property of strict unboundedness relate to each 
other in the following way
\begin{theorem}[Theorem 6.8, \cite{LW92}]\label{relation}
The sequence $(d_{T^nx}T)_{n\in\mathbb{N}}$ is strictly unbounded if and only if 
$\lim_{n \to \infty} \sigma(d_xT^n) = \infty$.
\end{theorem}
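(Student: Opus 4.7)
I would prove the two implications separately. Throughout, the key structural input is Q-monotonicity of $d_xT$ (established by Wojtkowski for three balls), which makes $n \mapsto Q(d_xT^n v)$ non-decreasing for every fixed $v \in L_1\oplus L_2$, and the fact that $Q$ is a continuous quadratic form. Both directions reduce to a careful interplay between (i) the open cone, where the ratio $Q(d_xT^n v)/Q(v)$ is well-defined, and (ii) the boundary $\partial \mathcal{C}(x) = \{Q = 0\}$, where the closed cone $\overline{\mathcal{C}(x)}$ picks up the null vectors of $Q$.

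\textbf{First direction (strict unboundedness implies $\sigma \to \infty$).} Let $S$ be the unit sphere (in the Euclidean norm) of the ambient space $L_1\oplus L_2$ and set $K = \overline{\mathcal{C}(x)} \cap S$; this is a compact set. Define $f_n \colon K \to \mathbb{R}$ by $f_n(v) = Q(d_xT^n v)$. Each $f_n$ is continuous, the sequence is pointwise non-decreasing by Q-monotonicity, and by the strict unboundedness hypothesis $f_n(v) \to +\infty$ for every $v \in K$. A Dini-type argument then yields uniform divergence: for any $M > 0$ the open sets $\{v \in K : f_n(v) > M\}$ form a monotone cover of the compact set $K$, so some $f_N$ already exceeds $M$ on all of $K$. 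Since $Q$ is bounded on $K$, say by $Q_{\max}$, the homogeneity of degree two gives
\begin{equation*}
\sigma(d_xT^n)^2 \;=\; \inf_{v \in \mathcal{C}(x)\cap S} \frac{Q(d_xT^n v)}{Q(v)} \;\geq\; \frac{\inf_{v \in K} f_n(v)}{Q_{\max}} \;\longrightarrow\; \infty.
\end{equation*}

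\textbf{Second direction ($\sigma \to \infty$ implies strict unboundedness).} For $v \in \mathcal{C}(x)\setminus\{\vec{0}\}$ the estimate is immediate, since $Q(d_xT^n v) \geq \sigma(d_xT^n)^2 Q(v) \to \infty$. The delicate case is a null vector $v \in \partial \mathcal{C}(x)\setminus\{\vec{0}\}$. Here I would perturb into the open cone: denoting the polarization of $Q$ by $B$, for any $w \in \mathcal{C}(x)$ with $Q(w) = 1$ one has $Q(v+\epsilon w) = 2\epsilon B(v,w) + \epsilon^2$. Replacing $w$ by $-w$ if necessary (both lie in $\mathcal{C}(x)$, as $Q$ is even), I may assume $B(v,w) \geq 0$, so $v + \epsilon w \in \mathcal{C}(x)$ for every $\epsilon > 0$. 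Applying the $\sigma$-lower bound to $v + \epsilon w$ and expanding by bilinearity,
\begin{equation*}
Q(d_xT^n v) + 2\epsilon\, B(d_xT^n v, d_xT^n w) + \epsilon^2 Q(d_xT^n w) \;\geq\; \sigma(d_xT^n)^2\bigl(2\epsilon B(v,w) + \epsilon^2\bigr).
\end{equation*}
Specializing $\epsilon = 1/\sigma(d_xT^n)$ gives a lower bound of $2\sigma(d_xT^n)B(v,w) + 1$ on the right. Provided $B(v,w) > 0$, this tends to infinity, and I would need to absorb the cross term $(2/\sigma_n)B(d_xT^n v, d_xT^n w)$; rewriting it as $(1/\sigma_n)\bigl(Q(d_xT^n(v+w)) - Q(d_xT^n v) - Q(d_xT^n w)\bigr)$ and using Q-monotonicity together with the fact that both $d_xT^n v$ and $d_xT^n w$ lie in the closed image cone provides the control needed to conclude $Q(d_xT^n v) \to \infty$. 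If $B(v,w) = 0$ for every such $w$, a separate (and easier) argument using $\epsilon^2$ rather than $\epsilon$ as leading term handles that degenerate subcase.

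\textbf{Expected main obstacle.} The reverse direction at the cone boundary is the hard step, since $Q$ has mixed signature $(3,3)$ and no reverse Cauchy--Schwarz inequality is available on $\overline{\mathcal{C}(x)}$; controlling the bilinear cross term $B(d_xT^n v, d_xT^n w)$ uniformly in $n$ is precisely where the care must go. This is exactly the place where one has to exploit the symplectic/Lagrangian structure behind $Q$, and it is the point at which I expect to lean most heavily on the techniques of Liverani and Wojtkowski \cite{LW92} from which this theorem is taken.
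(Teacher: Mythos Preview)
The paper does not prove this statement at all: it is quoted as Theorem~6.8 of \cite{LW92} and used as a black box (see also the references to \cite[Theorem 6.8]{LW92} in Sections~2 and~6). So there is no ``paper's own proof'' to compare against; I can only assess your argument on its merits.

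Your forward direction is correct. The Dini--type covering argument on the compact set $K=\overline{\mathcal{C}(x)}\cap S$ is exactly the right mechanism: continuity of $f_n$, pointwise monotonicity from Q\nobreakdash-monotonicity, and pointwise divergence combine to force $\inf_K f_n\to\infty$, and then $\sigma(d_xT^n)^2 \geq (\inf_K f_n)/Q_{\max}$ follows by homogeneity.

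Your reverse direction, however, has a genuine gap at the cone boundary which your own final paragraph half--acknowledges. After specializing $\epsilon=1/\sigma_n$ you need \emph{upper} bounds on $(2/\sigma_n)B(d_xT^nv,d_xT^nw)$ and on $(1/\sigma_n^2)Q(d_xT^nw)$ in order to isolate $Q(d_xT^nv)$ from below. But the hypothesis $\sigma_n\to\infty$ only furnishes \emph{lower} bounds on $Q(d_xT^nw)$ and $Q(d_xT^n(v+w))$; nothing prevents $Q(d_xT^nw)$ from growing much faster than $\sigma_n^2$, since the infimum defining $\sigma_n$ may be realized far from $w$. Your polarization rewriting $2B_n=Q(d_xT^n(v+w))-Q(d_xT^nv)-Q(d_xT^nw)$ does not help: after substitution the offending term becomes $(1/\sigma_n)Q(d_xT^n(v+w))$, which suffers from exactly the same lack of upper control. (The ``degenerate subcase'' $B(v,w)=0$ for all admissible $w$ is in fact vacuous, since non\nobreakdash-degeneracy of $B$ then forces $v=0$; so the whole burden is on the case you have not closed.)

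What is missing is a genuinely symplectic fact that you cannot extract from Q\nobreakdash-monotonicity alone. In \cite{LW92} the bridge is the equivalence, for a monotone \emph{symplectic} map $L$, between $\sigma(L)>1$ and strict monotonicity $L\overline{\mathcal{C}}\subset\mathcal{C}\cup\{0\}$; this is where the Lagrangian structure tames the indefinite signature of $Q$. Once one knows that some $d_xT^k$ maps the closed cone strictly inside, boundary vectors are pushed into the interior after $k$ steps and the argument can proceed. Your sketch correctly locates the difficulty but does not supply this lemma, and without it the perturbation estimate does not close.
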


\section{Ergodicity}
The theory of Katok-Strelcyn \cite{KS86} implies, that since our system has non-zero Lyapunov 
exponents almost everywhere, we can partition the phase space $\mathcal{M}^+$ into countably many 
components on which the conditional smooth measure is ergodic. To prove that there is only one 
ergodic component the following two points need to be verified
\begin{enumerate}
\item Local Ergodicity. \label{geh}
\item Abundance of least expanding points. \label{scheissn}
\end{enumerate}
\subsection{Local Ergodicity}
We start with the following
\begin{definition}\label{reg}
A compact subset $X \subset \mathcal{M}^{+}$, is called regular if
\begin{enumerate}
	\item $X = \bigcup_{i = 1}^{n} I_{i}$, where $I_{i}$ are compact subsets,
	\item $\operatorname{dim} I_{i} = 3$,
	\item $I_{i} \cap I_{j} \subset \partial I_{i} \cup \partial I_{j},\ i \neq j$,
	\item $\partial I_{i} = \bigcup_{j = 1}^{m} H_{i,j}$, where $\operatorname{dim} H_{i,j} = 2$ and $H_{i,j}$ is compact.
\end{enumerate}
\end{definition}

Local ergodicity amounts to showing that around a point with least expansion 
coefficient larger than three, it is possible to find an open neighbourhood, which lies (mod 0) in one 
ergodic component. To claim this, one needs to check the following five conditions
\begin{con}[Regularity of singularity sets]\label{condi3}
	The singularity sets $\mathcal{S}_n^{+}$ and $\mathcal{S}_n^{-}$ are both regular
	sets for every $n \geq 1$. 
\end{con}

\begin{con}[Non-contraction property]\label{condi1}
There exists $\zeta > 0$, such that for every $n \geq 1$, 
$x \in \mathcal{M}^+ \setminus \mathcal{S}_n^+$,  and 
$(\delta \xi, \delta \eta) \in \overline{\mathcal{C}(x)}$, we have 
\begin{align}
\|d_xT^n(\delta \xi, \delta \eta)\| \geq \zeta \|(\delta \xi, \delta \eta)\|.\nonumber
\end{align}
\end{con}

\begin{con}[Chernov-Sinai Ansatz]\label{condi2}
For $\mu_{|_{\mathcal{S}^{\pm}}}$ -a.e. $x \in \mathcal{S}^{\pm}$, we have
\begin{align}
\lim_{n \to +\infty} Q(d_xT^{\mp n}(\delta \xi, \delta \eta)) = \mp\infty,\nonumber
\end{align}
for all $(\delta \xi, \delta \eta) \in \overline{\mathcal{C}(x)}$.
\end{con}

\begin{con}[Continuity of Lagrangian subspaces]\label{condi4}
The ordered pair of transversal Lagrangian subspaces $(L_{1}(x)$, $L_{2}(x))$ varies continuously in 
$\operatorname{int}\mathcal{M}^{+}$. 
\end{con}

\begin{con}[Proper Alignment]\label{condi5}
There exists $N \geq 0$, such that for every $x \in \mathcal{S}^{+}$ 
resp. $\mathcal{S}^{-}$, we have $d_xT^{-N} v_{x}^{+}$ resp. $d_xT^N v_{x}^{-}$ belong to 
$\overline{\mathcal{C}^\prime (T^{-N} x)}$ resp. $\overline{\mathcal{C}(T^N x)}$, 
where $v_{x}^{+}$ resp. $v_{x}^{-}$ are the characteristic lines\footnote{The characteristic line 
$v_{x}^{\pm}$ is a vector of $\mathcal{T}_{x}\mathcal{S}^{\pm}$ that has the property of 
annihilating every other vector $w \in \mathcal{T}_{x}\mathcal{S}^{\pm}$ with respect to the 
symplectic form $\omega$, i.e. $\omega(v_{x}^{\pm},w) = 0$, $\forall\ w \in \mathcal{T}_{x}\mathcal{S}^
{\pm}$. Alternatively stated, it is the $\omega$-orthogonal complement of $\mathcal{T}_{x}\mathcal{S}^{
\pm}$. Note, that in symplectic geometry the $\omega$-orthogonal complement of a codimension one 
subspace is one dimensional.} 
of $\mathcal{T}_x\mathcal{S}^{+}$ resp. $\mathcal{T}_x\mathcal{S}^{-}$.
\end{con}
At the moment, for three or more falling balls, only Condition \ref{condi4} has been verified.
This is in fact easy to see, because the canonical pair of transversal Lagrangian subspaces 
(\ref{constant}) does not depend on the base point $x$ and is therefore constant in $\mathcal{M}^+$. 

\begin{local}
If Conditions \ref{condi3} - \ref{condi5} are satisfied, then for any $x \in \mathcal{M}^+$ and 
$n \geq 1$, such that $\sigma(d_xT^n) > 3$, there exists an open ergodic neighbourhood 
$\mathcal{U}(x)$, that lies $(\operatorname{mod} 0)$ in one ergodic component. 
\end{local}

Chernov postulated in \cite{Ch93} an equivalent condition to Condition \ref{condi5}. Denote by 
$W^{u}(x)$ resp. $W^{s}(x)$ the unstable resp. stable manifolds at point $x$.
\begin{con}[Transversality]\label{condi6}
For $\mu_{|_{\mathcal{S}^{\pm}}}$ -a.e. $x$, the stable subspace $W^{s}(x)$ resp. unstable subspace 
$W^{u}(x)$ is transversal to $\mathcal{S}^-$ resp. $\mathcal{S}^+$. 
\end{con}
\begin{lemma}\label{equivalence}
The proper alignment condition and the transversality condition are pointwise equivalent.
\end{lemma}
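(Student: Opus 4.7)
The plan is to reduce the equivalence to one symplectic linear algebra fact and then match the two conditions dynamically. The key ingredient is the following elementary observation: in a symplectic vector space, a Lagrangian subspace $L$ is transversal to a codimension-one subspace $H$ if and only if the characteristic line $v = H^{\omega\perp}$ does not belong to $L$; indeed, $L \subset H$ is equivalent to $\omega(v,\ell)=0$ for every $\ell \in L$, i.e.\ to $v \in L^{\omega\perp} = L$, the last equality being the Lagrangian property. Applied with $L = T_xW^u(x)$ and $H = \mathcal{T}_x\mathcal{S}^+$ for $x \in \mathcal{S}^+$, and with $L = T_xW^s(x)$ and $H = \mathcal{T}_x\mathcal{S}^-$ for $x \in \mathcal{S}^-$, Condition \ref{condi6} rewrites as $v_x^+ \notin T_xW^u(x)$ and $v_x^- \notin T_xW^s(x)$, at $\mu_{|_{\mathcal{S}^{\pm}}}$-a.e.\ point $x$.

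The second step is to match these non-inclusions with proper alignment. Decompose $v_x^- = u + s$ along the invariant Lagrangian splitting $T_xW^u(x) \oplus T_xW^s(x)$. Under $d_xT^N$ the stable part $s$ contracts while the unstable part $u$ expands; moreover $T_yW^u \subset \overline{\mathcal{C}(y)}$ and $T_yW^s \subset \overline{\mathcal{C}'(y)}$. If $u \neq 0$ (the transversality version), then for $N$ large the iterate $d_xT^N v_x^-$ is dominated by its $u$-part, and a direct computation combining the Q-monotonicity $Q(d_xT^N w) \geq Q(w)$ with the symplectic invariance $\omega(d_xT^N u, d_xT^N s) = \omega(u,s)$ shows that $Q(d_xT^N v_x^-)$ eventually becomes strictly positive, placing $d_xT^N v_x^- \in \overline{\mathcal{C}(T^Nx)}$; that is, proper alignment holds at $x$. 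Conversely, if $u = 0$, then $d_xT^N v_x^-$ stays inside $T_{T^Nx}W^s(T^Nx)$ for every $N$, a Lagrangian subspace which, by eventual strict Q-monotonicity, sits in $\mathcal{C}'(T^Nx)$ away from the origin; hence the iterate never lies in $\overline{\mathcal{C}(T^Nx)}$ and proper alignment fails. The symmetric argument, with $T$ replaced by $T^{-1}$ and the roles of $W^u, W^s$ exchanged, handles the case $x \in \mathcal{S}^+$.

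The main technical obstacle lies in the step $u \neq 0 \Rightarrow$ proper alignment: one must control the cross term $2Q(d_xT^N u, d_xT^N s)$ and verify that it cannot cancel the growing contribution $Q(d_xT^N u) \geq Q(u)$. This is exactly where the strict unboundedness supplied by the Main Theorem is indispensable — it forces $T_yW^u \setminus \{\vec{0}\} \subset \mathcal{C}(y)$ strictly, not merely on its closure, which pins the sign of the dominant term and yields $Q(d_xT^N v_x^-) \to +\infty$, matching the conclusion of proper alignment in a robust way.
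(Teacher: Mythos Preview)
Your symplectic linear algebra reduction in the first paragraph is correct and is exactly the content of the paper's second direction: for a Lagrangian subspace $L$ and a hyperplane $H$, the inclusion $L\subset H$ is equivalent to $H^{\omega\perp}\in L^{\omega\perp}=L$. So both conditions collapse to the single question of whether $v_x^-\in T_xW^s(x)$ (resp.\ $v_x^+\in T_xW^u(x)$).

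Where you diverge from the paper is in the second step. The paper does not decompose $v_x^-=u+s$ and chase a cross term; it simply invokes the cone-field characterisation of the stable subspace: with an eventually strictly invariant cone field one has $T_xW^s(x)=\{v:\ d_xT^n v\notin\mathcal{C}(T^nx)\text{ for all }n\ge 0\}$. Thus ``not properly aligned'' (no iterate of $v_x^-$ ever lands in the closed forward cone) is literally the statement $v_x^-\in T_xW^s(x)$, and conversely. This needs only the eventual strict $Q$-monotonicity already established by Wojtkowski, not the Main Theorem.

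Your decomposition route runs into a genuine problem at the cross term. The bilinear form associated to $Q(\delta\xi,\delta\eta)=\langle\delta\xi,\delta\eta\rangle$ is the symmetrisation $B(a,b)=\tfrac12(\langle\delta\xi_a,\delta\eta_b\rangle+\langle\delta\xi_b,\delta\eta_a\rangle)$, whereas $\omega$ is the antisymmetrisation; the symplectic invariance $\omega(d_xT^Nu,d_xT^Ns)=\omega(u,s)$ therefore says nothing about $B(d_xT^Nu,d_xT^Ns)$. Strict unboundedness gives $Q(d_xT^Nu)\to+\infty$, but without a separate bound on $\|d_xT^Nu\|\cdot\|d_xT^Ns\|$ you cannot conclude that the cross term is dominated. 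So the implication $u\neq 0\Rightarrow$ proper alignment is not established by your argument as written. Replacing the decomposition by the one-line cone characterisation of $T_xW^s(x)$ closes the gap and, incidentally, removes any dependence on the Main Theorem---which is desirable, since in the paper Lemma~\ref{equivalence} precedes it.
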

\begin{proof}
Assume that at $x \in \mathcal{S}^-$ the singularity manifold is not properly aligned. Then 
$d_xT^n v_x^- \notin \mathcal{C}(T^nx)$, for all $n \geq 0$. This implies that 
$v_x^- \in \mathcal{T}W^s(x)$. Since $\mathcal{T}W^s(x)$ is a Lagrangian subspace (see e.g. \cite[Lemma 4]{W88})
$\omega(v_x^-,v) = 0$, for all $v \in \mathcal{T}W^s(x)$. Additionally, 
$\omega(v_x^-,w) = 0$, for all $w \in \mathcal{T}_x\mathcal{S}^-$, because the characteristic line 
is the $\omega$-orthogonal complement of $\mathcal{T}_x\mathcal{S}^-$. But this yields 
$\mathcal{T}W^s(x) \subset \mathcal{T}_x\mathcal{S}^-$, hence, the singularity manifold 
$\mathcal{S}^-$ is not transversal to the stable manifold $W^s(x)$ at $x$.

Assume that at point $x \in \mathcal{S}^-$, the singularity manifold and the stable manifold 
$W^s(x)$ are not transversal but still properly aligned, i.e. 
$\mathcal{T}W^s(x) \subset \mathcal{T}\mathcal{S}^-$ and $v_x^- \cap \mathcal{T}W^s(x) = \varnothing$. 
Since transversality is not satisfied and $v_x^-$ is the characteristic line, we have 
$\omega(v_x^-,v) = 0$, for all $v \in \mathcal{T}W^s(x)$. This means, that 
$v_x^- \in (\mathcal{T}W^s(x))_{\omega}^{\perp}$, where $(\mathcal{T}W^s(x))_{\omega}^{\perp}$ 
is the $\omega$-orthogonal complement of $\mathcal{T}W^s(x)$. But $\mathcal{T}W^s(x)$ is 
a Lagrangian subspace and, thus, $(\mathcal{T}W^s(x))_{\omega}^{\perp} = \mathcal{T}W^s(x)$. 
Hence, $v_x^- \in \mathcal{T}W^s(x)$, which results in a contradiction.
\end{proof}
Even though the proper alignment condition and the transversality condition are pointwise 
equivalent it is presently unclear whether it is enough for the local ergodic theorem (in the 
Liverani-Wojtkowski framework) to 
hold by considering the validity of the proper alignment condition only on a set of full measure 
with respect to the measure $\mu_{|_{\mathcal{S}^{\pm}}}$.

\subsubsection{The current state of proper alignment}\label{state}
There has been a substantial misconception whether the system of falling balls is properly aligned 
or not. In brief, the correct answer to this question is that on some part of the singularity 
manifold the system is properly aligned and on the complementary part we simply do not know. 
The latter affects only the singularity manifolds $\mathcal{S}_{1,2}^{\pm}$, since every point 
on $\mathcal{S}_{3,1}^{\pm}$ is properly aligned.
The original formulation of the proper alignment condition in \cite{LW92} is more restrictive
than the one stated above. Namely, it demands the characteristic line $v_x^-$ resp. $v_x^+$ to lie in 
$\mathcal{C}(x)$ resp. $\mathcal{C}^{\prime}(x)$ for every point of the singularity 
manifolds. 
Below of the original proper alignment condition it says (\cite[p. 37]{LW92})
\begin{quote}
It will be clear from the way in which the proper alignment of singularity sets is 
used in Section 12 that it is sufficient to assume that there is $N$ such that 
$T^N\mathcal{S}^-$ and $T^{-N}\mathcal{S}^+$ are properly aligned. 
\end{quote}
In Section 12 of \cite{LW92} the authors remind the reader, that, in their constructive argument, 
the size of the neighbourhood $\mathcal{U}(x)$, appearing in the Local Ergodic Theorem, was chosen 
small enough, such that $\mathcal{U}(x) \cap \mathcal{S}_N^- = \varnothing$. 
Due to the regularity of singularity manifolds (see Condition \ref{condi3}), for every $M > N$, there 
exists a finite $p = p(M) > 0$, such that $\bigcup_{i = N}^M T^i\mathcal{S}^- = \bigcup_{k = 1}^p I_k$, 
where $I_k$ are compact submanifolds (see Definition \ref{reg}). 
In the proof of Proposition 12.2, Liverani and Wojtkowski make use of the fact, that \textbf{every} point 
$x \in I_k$ is properly aligned (see \cite[p. 57]{LW92}). Hence, the relaxed version of the proper 
alignment condition (see Condition \ref{condi5}) is justified.

The authors continue (see \cite[p. 37]{LW92}) with the following assertion
\begin{quote}
We will show, in section 14, that for the system of falling balls even this weaker property 
[see Condition \ref{condi5}] fails.
\end{quote}
The content of the last quotation is wrong. We will now illustrate what Liverani and Wojtkowski really 
did in section 14: The argument is carried out for the singularity manifold $\mathcal{S}_{1,2}^-$. 
The characteristic line at point $x \in \mathcal{S}_{1,2}^-$ is given by
\begin{align}
v_x^- = \{(\delta q,\delta p) \in \mathcal{T}_x\mathcal{S}_{1,2}^-:\ 
&\delta q_1 = \delta q_2 = \delta q_3 = 0,\nonumber\\ 
&\sum_{i=1}^3 \delta p_i = 0,\ \sum_{i=1}^3 \frac{p_i\delta p_i}{m_i} = 0,\
\frac{p_1}{m_1} \leq \frac{p_2}{m_2} \leq \frac{p_3}{m_3}\}.\nonumber
\end{align}
The restrictions of the momenta follow from 
$\mathcal{S}_{1,2}^- \subset \mathcal{M}_2^+ \cap \mathcal{M}_3^+$. We will look at the set 
of momenta in a little bit more detail:  Without loss of generality let $t_0  < t_1$, 
$x = x(t_0) \in \mathcal{S}_{1,2}^-$ and $Tx = x(t_1) \in \mathcal{M}_1^+$. Since 
$p_1^+(t_0) / m_1 \leq p_2^+(t_0) / m_2 \leq p_3^+(t_0) / m_3$, applying the equations of motion
(\ref{equation2}) yields $p_1^-(t_1) / m_1 \leq p_2^-(t_1) / m_2 \leq p_3^-(t_1) / m_3$. 
Due to $x(t_1) \in \mathcal{M}_1^+$, we have $p_1^-(t_1) / m_1 < 0$. Incorporating the latter, 
we $(\operatorname{mod} 0)$ partition the set of eligible momenta at time $t_1$ into the subsets 
\begin{align}
&\mathsf{Mom}_1(q(t_1),p^-(t_1)) = \Bigl\{\frac{p_1^-(t_1)}{m_1} < 0 \leq \frac{p_2^-(t_1)}{m_2} \leq 
\frac{p_3^-(t_1)}{m_3}\Bigr\},\nonumber\\
&\mathsf{Mom}_2(q(t_1),p^-(t_1)) = \Bigl\{\frac{p_1^-(t_1)}{m_1} < \frac{p_2^-(t_1)}{m_2} \leq 0 \leq 
\frac{p_3^-(t_1)}{m_3}\Bigr\},\nonumber\\
&\mathsf{Mom}_3(q(t_1),p^-(t_1)) = \Bigl\{\frac{p_1^-(t_1)}{m_1} < \frac{p_2^-(t_1)}{m_2} \leq \frac{p_3^-(t_1)}{m_3} \leq 0\Bigr\}.\nonumber
\end{align}
Using again the equations of motion, we obtain in time $t_0$
\begin{align}
&\mathsf{Mom}_1(q(t_0),p^+(t_0)) = \Bigl\{\frac{p_1^+(t_0)}{m_1} < t_1-t_0 \leq \frac{p_2^+(t_0)}{m_2} \leq \frac{p_3^+(t_0)}{m_3}\Bigr\},\nonumber\\
&\mathsf{Mom}_2(q(t_0),p^+(t_0)) = \Bigl\{\frac{p_1^+(t_0)}{m_1} < \frac{p_2^+(t_0)}{m_2} \leq t_1-t_0 \leq \frac{p_3^+(t_0)}{m_3}\Bigr\},\nonumber\\
&\mathsf{Mom}_3(q(t_0),p^+(t_0)) = \Bigl\{\frac{p_1^+(t_0)}{m_1} < \frac{p_2^+(t_0)}{m_2} \leq \frac{p_3^+(t_0)}{m_3} \leq t_1-t_0\Bigr\}.\nonumber
\end{align}

Observe that all the momenta can only be simultaneously negative on the set $\mathsf{Mom}_3(q(t_0),p^+(t_0))$.

The quadratic form $Q$ of the contracting cone field in coordinates $(q,p)$ equals
\begin{align}
Q(\delta q, \delta p) = \sum_{i=1}^3 \delta q_i \delta p_i + \frac{p_i(\delta p_i)^2}{m_i^2}.\nonumber
\end{align}
Inserting $v_x^-$ into $Q$ results in
\begin{align}\label{qchar}
Q(v_x^-) = \sum_{i=1}^3 \frac{p_i(\delta p_i)^2}{m_i^2}.
\end{align}
Ths singularity manifold $\mathcal{S}_{1,2}^-$ at point $x$ is properly aligned if and only if 
$Q(v_x^-) \geq 0$. It is easy to see, that each of the sets $\mathsf{Mom}_i(q(t_0),p^+(t_0))$ contains 
a subset on which $\mathcal{S}_{1,2}^-$ is not properly aligned, i.e. $Q(v_x^-) < 0$. 
Hence, depending on the point $x \in \mathcal{S}_{1,2}^-$, (\ref{qchar}) can obtain non-negative and 
negative values on every set $\mathsf{Mom}_i(q(t_0),p^+(t_0))$.

Additionally note, that the image of the characteristic line is the characteristic line of the image, 
i.e. 
\begin{align}\label{image}
d_xT^n v_x^- = v_{T^nx}^-.
\end{align}
Combining this with the fact, that $d_xT$ is Q-monotone for 
every point $x \in \mathcal{M}^+$ (see Definition \ref{Defq}.3) we obtain, that once a point is 
properly aligned, it remains properly aligned.

We summarize, that on some parts of $\mathcal{S}_{1,2}^-$ the system of falling balls 
is properly aligned and on the complement we do not know, since an iterate of the characteristic 
line could very well be mapped into the contracting cone field. This is exactly what 
Liverani and Wojtkowski prove in section 14. More importantly, they do \textbf{not} examine 
whether any iterate of $v_x^-$ gets mapped into the contracting cone field or not. This is currently 
not known. 

\subsubsection{Iterates of the characteristic line}\label{iterates}
The Main Theorem allows us to compare the measure of iterated singular points, which are not properly 
aligned, to not properly aligned points of the iterated singularity manifold. For this, an immediate 
consequence of the Main Theorem is, that the monodromy matrix $d_xT$ is eventually strictly Q-monotone 
for every point (see e.g. (\ref{point3}) in Theorem \ref{thm1}), i.e. for every $x \in \mathcal{M}^+$, 
there exists $k = k(x) \geq 1$: $Q(d_xT^kv) > Q(v)$, for all $v \in L_1 \oplus L_2$. Define, 
for $n \geq 1$, the sets 
\begin{align}
A(n,\mathcal{S}_{1,2}^-) &= \{x \in \mathcal{S}_{1,2}^-:\ Q(v_x^-)<0,\ Q(d_xT^nv) > Q(v), 
\forall\ v \in L_1 \oplus L_2\},\nonumber\\
\bigcup_{n \geq 1} A(n,\mathcal{S}_{1,2}^-) &= A(\mathcal{S}_{1,2}^-).\nonumber
\end{align}
The sets $A(n,\mathcal{S}_{1,2}^-)$ consist of all points in $\mathcal{S}_{1,2}^{-}$, 
which are not properly aligned and have an 
eventually strictly Q-monotone monodromy matrix after $n$ steps. We remark, that the sets 
$A(n,\mathcal{S}_{1,2}^-)$ are empty for small values of $n$. Once 
$A(n,\mathcal{S}_{1,2}^-) \neq \varnothing$, the Q-monotonicity of $d_xT$ for every point 
implies that $A(n,\mathcal{S}_{1,2}^-) \subseteq A(n+1,\mathcal{S}_{1,2}^-)$. 
We split $A(n,\mathcal{S}_{1,2}^-)$ up into $A^+(n,\mathcal{S}_{1,2}^-) \cup 
A^-(n,\mathcal{S}_{1,2}^-)$, where
\begin{align}
A^+(n,\mathcal{S}_{1,2}^-) = 
\{x \in A(n,\mathcal{S}_{1,2}^-):\ Q(d_xT^nv_x^-) = Q(v_{T^nx}^-) \geq 0\},\nonumber\\
A^-(n,\mathcal{S}_{1,2}^-) = 
\{x \in A(n,\mathcal{S}_{1,2}^-):\ Q(d_xT^nv_x^-) = Q(v_{T^nx}^-) < 0\}.\nonumber
\end{align}
$A^+(n,\mathcal{S}_{1,2}^-)$ are the points which become properly aligned after at most 
$n$ iterates and $A^-(n,\mathcal{S}_{1,2}^-)$ are the points which remain not properly aligned 
after $n$ iterates. Keep in mind, that $A^+(n,\mathcal{S}_{1,2}^-)$ can be empty for some 
$n$. Due to the eventually strict Q-monotonicity of $d_xT$, we have
\begin{align}
Q(d_{T^nx}T^{-n} v_{T^nx}^-) < Q(v_{T^nx}^-),\ \forall\ T^nx \in T^n A(n,\mathcal{S}_{1,2}^-).\nonumber
\end{align}
Using the last statement together with (\ref{image}), we obtain
\begin{align}
T^n A^-(n,\mathcal{S}_{1,2}^-) = T^n A(n,\mathcal{S}_{1,2}^-) \cap 
A(T^n\mathcal{S}_{1,2}^-) \subset A(T^n\mathcal{S}_{1,2}^-).\nonumber
\end{align}
The latter conclusion holds for every $n$ 
with $A(n,\mathcal{S}_{1,2}^-) \neq \varnothing$ 
and can be repeated for future iterates. This proves, that the set $T^n A(n,\mathcal{S}_{1,2}^-)$ 
of not properly aligned points after $n$ iterations is strictly contained in the set $A(T^n\mathcal{S}_
{1,2}^-)$ of not properly aligned points of the singularity manifold $T^n\mathcal{S}_{1,2}^-$.

However, the size of $T^n A(n,\mathcal{S}_{1,2}^-)$ and whether there exists a fixed $N \geq 1$, for 
every $n \geq 1$, such that $T^N A(n,\mathcal{S}_{1,2}^-) = \varnothing$, remains unknown.

\subsection{Abudance of least expanding points}
Liverani and Wojtkowski require the point in the local ergodic theorem to have least expansion 
coefficient larger than three. However, after their formulation of the local ergodic theorem 
they point out (see \cite[p. 39]{LW92}) that there is no loss in generality in actually demanding 
that the least expansion coefficient is only larger than one. The reason for this is due to the 
fact, that the set of points with non-zero Lyapunov exponents has full measure 
(see \cite{S96}, \cite{W98}). We quote
\begin{quote}
Let us note that the conditions of the last theorem are satisfied for almost all points 
$p \in \mathcal{M}$. Indeed, let
\begin{align}
M_{n,\epsilon} = \{p \in \mathcal{M}|\sigma(D_pT^n) > \epsilon\}.\nonumber
\end{align}
Since almost all points are strictly monotone, then
\begin{align}
\bigcup_{n=1}^{+\infty}\bigcup_{\epsilon > 0}\mathcal{M}_{n,\varepsilon}\nonumber
\end{align}
has full measure. By the Poincar\'e Recurrence Theorem and the supermultiplicativity of the 
coefficient $\sigma$ we conclude that
\begin{align}
\bigcup_{n=1}^{+\infty}\mathcal{M}_{n,3}\nonumber
\end{align}
has also full measure.
\end{quote}

\begin{definition}
	A point $x \in \mathcal{M}^+$ is called least expanding, if there exists an $n \geq 1$, such that 
	$\sigma(d_xT^n) > 1$. 
\end{definition}
Once local ergodicity is established we know that every ergodic component is (mod 0) open. 
To obtain a single ergodic component one needs to verify
\begin{theorem}[Abudance of least expanding points]\label{abundance}
	The set of least expanding points has full measure and is arcwise connected.
\end{theorem}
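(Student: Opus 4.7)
The plan is to deduce both assertions directly from the Main Theorem. First I would invoke the Main Theorem to conclude that for every $x \in \mathcal{M}^+$ the sequence $(d_{T^nx}T)_{n \in \mathbb{N}}$ is strictly unbounded, and then apply Theorem \ref{relation} to translate this into $\lim_{n \to +\infty} \sigma(d_xT^n) = +\infty$. In particular $\sigma(d_xT^n) > 1$ holds for every $x$ and all sufficiently large $n = n(x)$, so every point of $\mathcal{M}^+$ is least expanding. Consequently, the set of least expanding points coincides with $\mathcal{M}^+$ itself and trivially has full $\mu$-measure.

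For the arcwise connectedness it then suffices to prove that $\mathcal{M}^+$ is arcwise connected. The plan here is first to show that each piece $\mathcal{M}_i^+$ is individually arcwise connected, and then to exhibit paths between consecutive pieces through their common boundary loci. Each $\mathcal{M}_i^+$ sits inside the connected manifold $E_c \cap \mathcal{N}(q,p)$ as a smooth hypersurface cut out by a single equality (either $q_1 = 0$ for $i=1$, or $q_{i-1} = q_i$ for $i = 2,3$) together with an open half-space inequality on the momenta; parameterizing by the remaining coordinates shows that $\mathcal{M}_i^+$ is a connected manifold with boundary, hence arcwise connected. For the gluing step, $\mathcal{M}_1^+$ and $\mathcal{M}_2^+$ both contain the locus $\{q_1 = q_2 = 0,\ 0 \leq p_1/m_1 \leq p_2/m_2\}$, while $\mathcal{M}_2^+$ and $\mathcal{M}_3^+$ both contain the locus $\{q_1 = q_2 = q_3\}$ with compatible momentum inequalities. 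Concatenating an arc inside $\mathcal{M}_1^+$ with a transit through the common boundary into $\mathcal{M}_2^+$, and similarly into $\mathcal{M}_3^+$, joins any two points of $\mathcal{M}^+$ by a continuous arc.

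Since the whole argument is essentially a corollary of the Main Theorem, I do not anticipate any serious obstacle. The only mildly non-routine step is the geometric verification that the shared loci between consecutive $\mathcal{M}_i^+$ are nonempty and that they can be reached continuously from within each piece they connect; this follows at once from the explicit defining (in)equalities and the observation that the momentum inequalities remain strictly satisfiable on an open neighbourhood of each shared locus.
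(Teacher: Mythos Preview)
Your proposal is correct and follows exactly the approach the paper indicates: in Section~2 the paper states that the abundance of least expanding points ``follow[s] at once from the Main Theorem, since $(d_{T^nx}T)_{n\in\mathbb{N}}$ is strictly unbounded if and only if $\lim_{n \to \infty} \sigma(d_xT^n) = \infty$'', which is precisely your first paragraph. The paper does not spell out the arcwise connectedness of $\mathcal{M}^+$ at all, so your second paragraph supplies detail the paper leaves implicit; the argument via the shared boundary loci of the pieces $\mathcal{M}_i^+$ is a reasonable way to fill this in.
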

More precisely, this implies, that one can connect any two least expanding points by a curve, 
which lies completely in the set of least expanding points. 
Consequently the points on the curve can be chosen in such a way, that the open 
neighbourhoods, from the local ergodic theorem, intersect pairwise on a set of positive measure. 
Hence, there can only be one ergodic component. For a more detailed proof see e.g.
\cite[p. 151 - 152]{CM06}.

\section{Strict unboundedness - Part I}
In this section we will begin with the proof of the strict unboundedness of the sequence $(d_{T^n x}T)_{n \in \mathbb{N}}$, 
for every $x \in \mathcal{M}^+$. Due to \cite[Theorem 6.8]{LW92} we have the following equivalence

\begin{theorem}\label{thm1}
	For every $x \in \mathcal{M}^+$, the sequence $(d_{T^n x}T)_{n \in \mathbb{N}}$ is strictly unbounded if and only if
	\begin{subequations}
		\begin{eqnarray}
		&&\text{For every}\ x \in \mathcal{M}^+,\ \text{the sequence}\ (d_{T^n x}T)_{n \in \mathbb{N}}\ \text{is unbounded.}\label{point2}\\
		&&\text{For every}\ x \in \mathcal{M}^+,\ \text{there exist}\ k_1, k_2 \in \mathbb{N},\ 
		\text{such that } Q(d_xT^{k_1}(\delta \xi, 0)) > 0\label{point3}\\ 
		&&\text{and}\ Q(d_xT^{k_2}(0, \delta \eta)) > 0,\ \text{for all}\ (\delta \xi, 0) \in L_1, (0, \delta \eta) \in L_2.\nonumber
		\end{eqnarray}
	\end{subequations}
\end{theorem}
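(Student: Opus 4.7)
The plan is to establish the equivalence in two parts. Throughout I exploit that $d_xT$ is already known to be $Q$-monotone at every point $x \in \mathcal{M}^+$ (recalled from Wojtkowski in the previous section), so the sequence $n \mapsto Q(d_xT^n v)$ is non-decreasing for every $v \in L_1 \oplus L_2$.

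For the forward implication, strict unboundedness gives (\ref{point2}) at once since $\mathcal{C}(x) \subset \overline{\mathcal{C}(x)}$. For (\ref{point3}), I first observe that $L_1 \setminus \{\vec{0}\} \subset \overline{\mathcal{C}(x)}$, because any $(\delta\xi, 0)$ is the limit of the cone vectors $(\delta\xi, \epsilon\delta\xi)$ as $\epsilon \to 0^+$. Strict unboundedness therefore yields $Q(d_xT^n v) \to +\infty$ pointwise on $L_1 \setminus \{\vec{0}\}$. Using $Q$-homogeneity I restrict to the unit sphere of $L_1$, cover it by open sets of the form $\{w : Q(d_xT^{n_i} w) > 0\}$, invoke compactness to extract a finite subcover, and set $k_1 := \max_i n_i$; $Q$-monotonicity then stabilizes the inequality, giving $Q(d_xT^{k_1} w) > 0$ uniformly on $L_1 \setminus \{\vec{0}\}$. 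The same argument with $L_2$ yields $k_2$.

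For the reverse implication, I fix $x$ and $v \in \overline{\mathcal{C}(x)} \setminus \{\vec{0}\}$, and decompose $v = v_1 + v_2$ with $v_i \in L_i$. If $Q(v) > 0$, then (\ref{point2}) applies directly. If $v \in L_1 \cup L_2$ (one of $v_1, v_2$ vanishes), use (\ref{point3}) to rotate the nonzero component into $\mathcal{C}(T^{k_i} x)$ after $k_i$ steps, then apply (\ref{point2}) at $T^{k_i} x$ via the chain rule $d_xT^{k_i+n} = d_{T^{k_i} x}T^n \circ d_xT^{k_i}$.

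The main obstacle is the remaining case: $v_1, v_2 \neq \vec{0}$ with $Q(v) = \omega(v_1, v_2) = 0$. Setting $k = \max(k_1, k_2)$ so that $Q(d_xT^k v_i) > 0$ for $i = 1, 2$, the expansion
\begin{align}
Q(d_xT^k v) = Q(d_xT^k v_1) + Q(d_xT^k v_2) + 2 B(d_xT^k v_1, d_xT^k v_2),\nonumber
\end{align}
with $B$ the polarization of $Q$, has a cross-term that could a priori cancel the positive diagonal. To control it, I apply $Q$-monotonicity to every vector in the 2-plane $\operatorname{span}(v_1, v_2)$, all of which are $Q$-null since $\omega(v_1, v_2) = 0$. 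This forces the symmetric $2 \times 2$ matrix
\begin{align}
\begin{pmatrix} Q(d_xT^k v_1) & B(d_xT^k v_1, d_xT^k v_2) \\ B(d_xT^k v_1, d_xT^k v_2) & Q(d_xT^k v_2) \end{pmatrix}\nonumber
\end{align}
to be positive semi-definite, yielding the Cauchy--Schwarz bound $B(d_xT^k v_1, d_xT^k v_2)^2 \leq Q(d_xT^k v_1)\, Q(d_xT^k v_2)$ and hence $Q(d_xT^k v) \geq \bigl(\sqrt{Q(d_xT^k v_1)} - \sqrt{Q(d_xT^k v_2)}\bigr)^2 \geq 0$. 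Upgrading non-negativity to strict positivity is the hardest step: equality would force $(1,1)$ into the kernel of the Gram matrix and hence the rigid balance $Q(d_xT^k v_1) = Q(d_xT^k v_2)$ together with $B(d_xT^k v_1, d_xT^k v_2) = -\sqrt{Q(d_xT^k v_1)\, Q(d_xT^k v_2)}$; combined with the symplectic constraint $\omega(d_xT^k v_1, d_xT^k v_2) = \omega(v_1, v_2) = 0$ and the unbounded growth of $Q(d_xT^k v_i)$ from (\ref{point2}), this balance cannot persist for all $k$, so $Q(d_xT^N v) > 0$ for some $N$. Applying (\ref{point2}) at $T^N x$ then drives $Q(d_xT^n v) \to +\infty$, completing the proof.
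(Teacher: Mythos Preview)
The paper does not prove this theorem itself: it simply cites \cite[Theorem~6.8]{LW92} and then uses the equivalence. So you are supplying an argument where the paper defers to the literature.

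Your forward implication is fine. The reverse implication, however, has a genuine gap in the ``hardest step''. After establishing that the Gram matrix is positive semi-definite and that $Q(d_xT^k v)=0$ forces $Q(d_xT^k v_1)=Q(d_xT^k v_2)$ with $B(d_xT^k v_1,d_xT^k v_2)=-Q(d_xT^k v_1)$, you assert that ``combined with the symplectic constraint $\omega(d_xT^k v_1,d_xT^k v_2)=0$ and the unbounded growth of $Q(d_xT^k v_i)$ \ldots\ this balance cannot persist for all $k$''. You give no mechanism for this. Writing $d_xT^kv_1=(a,b)$, $d_xT^kv_2=(a',b')$ in $L_1\oplus L_2$, your constraints reduce to $\langle a,b\rangle=\langle a',b'\rangle=c_k$ and $\langle a,b'\rangle=\langle a',b\rangle=-c_k$; nothing here is incompatible with $c_k\to\infty$, and the symplectic constraint is already absorbed in these equalities rather than providing an extra obstruction. (One can push further in the two-dimensional $L_i$ of this paper and eventually force $d_xT^kv$ into $L_1$ or $L_2$, whence (\ref{point3}) at $T^kx$ finishes---but none of this is in your write-up.)

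The clean route, and presumably the one underlying \cite{LW92}, is to bypass the Gram matrix entirely and invoke Lemma~\ref{lem}, already stated in the paper. Condition (\ref{point3}) with $N=\max(k_1,k_2)$ is exactly the hypothesis of Lemma~\ref{lem}, so $d_xT$ is eventually strictly $Q$-monotone: there is $k$ with $Q(d_xT^k w)>Q(w)$ for every $w\neq 0$. Applied to any $v\in\overline{\mathcal{C}(x)}\setminus\{\vec 0\}$ this gives $Q(d_xT^kv)>Q(v)\geq 0$, so $d_xT^kv\in\mathcal{C}(T^kx)$, and (\ref{point2}) at $T^kx$ finishes. This replaces your entire boundary analysis by a single citation.
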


We will prove the strict unboundedness by equivalently proving properties (\ref{point2}) and (\ref{point3}).

The most important ingredient for (\ref{point2}) is the following
\begin{theorem}\label{thm2}
	There exists a positive constant $\Lambda > 0$, such that for all $x \in \mathcal{M}^+$, 
	there exists a sequence of strictly increasing positive  
	integers $(n_k)_{k \in \mathbb{N}} = (n_k(x))_{k \in \mathbb{N}}$ and for all 
	$(0, \delta \eta) \in L_2:$
	\begin{align}\label{esta1}
	Q(d_{T^{n_{2k-2}}x}T^{n_{2k-1} - n_{2k-2}}(0, \delta \eta)) > \Lambda \|(0, \delta \eta)\|^2.
	\end{align}
\end{theorem}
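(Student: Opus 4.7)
The plan is to show that along every forward orbit of $T$ one can find infinitely many "expansion windows" $[n_{2k-2}, n_{2k-1}]$ within each of which the image of every pure $L_2$-vector $(0,\delta\eta)$ has $Q$-value at least $\Lambda\|(0,\delta\eta)\|^2$, with a uniform $\Lambda$. Since the free flight acts as the identity on the tangent space in the $(\xi,\eta)$ coordinates and the floor collision $d\Phi_{0,1}$ preserves $L_2$, the only source of $Q$-expansion on $L_2$ is the upper-right block $U_i$ in an interior collision derivative $d\Phi_{i,i+1}$, whose nontrivial entry is the expansion coefficient $\alpha_i$ of (\ref{alpha}).

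First I would carry out the one-step computation for each interior collision: using the block structure of $d\Phi_{i,i+1}$ together with the symplectic identity $(M_i^T)^2 = I$, one obtains $Q\bigl(d\Phi_{1,2}(0,\delta\eta)\bigr)$ as an explicit expression whose leading term is a multiple of $\alpha_1(\delta\eta_2)^2$, and similarly $Q\bigl(d\Phi_{2,3}(0,\delta\eta)\bigr)$ as an expression whose leading term is a multiple of $\alpha_2(\delta\eta_3)^2$. Since $L_2$ is two-dimensional, parameterized by $(\delta\eta_2,\delta\eta_3)$, within each window one needs contributions from both collision types so that the accumulated $Q$-value, viewed as a quadratic form in $(\delta\eta_2,\delta\eta_3)$, is uniformly positive-definite. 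The $Q$-monotonicity of $d_xT$ at every $x$ (Proposition 3 of \cite{W90a}) ensures that a contribution created at one interior collision is not destroyed by subsequent dynamics.

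Second I would construct the sequence $(n_k)$. By confinement to the energy surface $E_c$ and the observation of Section 3 that every collision type occurs in finite time, every forward orbit contains infinitely many interior collisions of each type. I would choose each window $[n_{2k-2}, n_{2k-1}]$ so that it includes at least one $(1,2)$- and one $(2,3)$-collision whose relative velocity $v_i^- - v_{i+1}^-$ is bounded below by a constant depending only on the masses and the total energy. For a generic orbit this follows from Poincar\'e recurrence to a compact subset of $\mathcal{M}^+$ on which the relevant velocity differences stay above a fixed threshold. Combining the two one-step estimates through $Q$-monotonicity then yields a bound of the form $c_1\alpha_1(\delta\eta_2)^2 + c_2\alpha_2(\delta\eta_3)^2 \geq \Lambda(\delta\eta_2^2+\delta\eta_3^2) = \Lambda\|(0,\delta\eta)\|^2$.

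The main obstacle I expect is establishing this uniformly for \emph{every} $x$ rather than only almost every one. Orbits approaching the singular set $\mathcal{S}^\pm$, where two interior collisions nearly coincide, can exhibit all relative velocities at nearby interior collisions simultaneously small, so a plain Poincar\'e recurrence argument is insufficient. Here the geometric reformulation as a particle falling in a wedge (introduced in Section 7) is likely to be essential: it converts the required lower bound on $\alpha_i$ at recurrent interior collisions into a deterministic statement about the angles between successive reflections in a billiard-type configuration, which can be controlled uniformly along every orbit. The constant $\Lambda$ then emerges from combining the resulting worst-case bound on $\alpha_i$ with the explicit formulas from the one-step computation.
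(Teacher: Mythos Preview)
Your outline has the right architecture---each window must contain a $(1,2)$- and a $(2,3)$-collision with velocity differences $v_i^--v_{i+1}^-$ uniformly bounded below, and you correctly identify the wedge picture as the tool for upgrading ``almost every'' to ``every''. But the heart of the argument is precisely the step you leave as ``likely to be essential'', and what you sketch in its place does not close it. The paper's proof rests on the special mass relation~(\ref{masses}), which you never invoke: only under~(\ref{masses}) does the simple wedge $W_x(h_1,h_2,h_3)$ unfold by six reflections into a wide wedge whose projection along $h_1$ is an \emph{equilateral triangle}~(\ref{triangle}). In that triangle the projected trajectory between two consecutive floor collisions is a single straight segment, which forces the interior-collision pattern to be one of exactly four types, Cases I--IV in~(\ref{cases}). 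The windows are then not ``some stretch containing at least one collision of each type'' but literally two \emph{consecutive} interior collisions, so that $d_{T^{n_{2k-2}}x}T^{n_{2k-1}-n_{2k-2}}$ equals $d\Phi_{1,2}d\Phi_{2,3}$ or $d\Phi_{2,3}d\Phi_{1,2}$.

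The uniform lower bound on the velocity differences then comes from a concrete geometric dichotomy rather than from Poincar\'e recurrence (which the paper does not use at all). By Proposition~\ref{4}, a collision with an inner face $W_x(h_1,h_j)$ is near-grazing iff the planar subspace $\mathsf{P}$ carrying the parabolic arc is nearly perpendicular to the opposite outer face of the wide wedge. In each of Cases I--IV one checks that such near-perpendicularity is incompatible with the trajectory starting on the prescribed initial face and ending on the prescribed final face; in Case~I this even gives the explicit angular gap $\pi/6$. Hence there is a uniform $\Theta>0$ bounding the relevant $v_i^--v_{i+1}^-$ from below along \emph{every} orbit. With $\Theta$ in hand the finish is as you anticipate: replace $\alpha_1,\alpha_2$ by their values at $\Theta$, note that $u\mapsto\langle X_2(\Theta)u,X_1^Tu\rangle$ is a positive continuous functional on the unit sphere of $L_2$, and take $\Lambda$ to be its minimum. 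Your separate one-step estimates combined ``through $Q$-monotonicity'' would not obviously yield a positive-definite form in $(\delta\eta_2,\delta\eta_3)$ because the $M_i^T$ blocks mix components; the paper avoids this by treating the two-step composition as a single quadratic form.
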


In fact, we will prove, that $dT^{n_{2k-1} - n_{2k-2}}$ either equals $d\Phi_{1,2}d\Phi_{2,3}$ or 
$d\Phi_{2,3}d\Phi_{1,2}$.
Recursively define $(\delta \xi_n, \delta \eta_n) = dT (\delta \xi_{n-1}, \delta \eta_{n-1})$, with 
$(\delta \xi_0, \delta \eta_0) = (\delta \xi, \delta \eta)$ and $q_n = Q(\delta \xi_n, \delta \eta_n)$. 
From \cite{W90a} we know, that $d_xT$ is Q-monotone for every $x \in \mathcal{M}^+$, therefore, $q_{n+1} \geq q_n$. 
Hence, in order to prove $\lim_{n \to +\infty} q_n = +\infty$, it is enough to prove this 
divergence along a subsequence $(q_{n_{2k-1}})_{k \in \mathbb{N}}$. We define this subsequence by setting
\begin{align}\label{thisisit}
q_{n_{2k-1}} = Q(d_{T^{n_{2k-2}}x}T^{n_{2k-1} - n_{2k-2}}(\delta \xi_{n_{2k-2}}, \delta \eta_{n_{2k-2}})).
\end{align}

We will postpone the proof of Theorem \ref{thm2} and property (\ref{point3}) to section 8, as they will both 
follow from our analysis of a particle moving inside a wedge (see section 7). Here we will show how Theorem 
\ref{thm2} is utilized to prove the unboundedness property (\ref{point2}). In fact, (\ref{point2}) will 
be obtained by using the estimate from Theorem \ref{thm2} in a modified version of the unboundedness proof 
in \cite[p. 32 - 33]{LW92}. Beforehand we need to take some preparatory steps. 
\begin{prop}\label{prop5}
	For every $x \in \mathcal{M}^+$, we have
	\begin{align}\label{41}
	q_{n_{2k+1}} > q_{n_{2k}} + \Lambda\|(0, \delta \eta_{n_{2k}})\|^2.
	\end{align}
\end{prop}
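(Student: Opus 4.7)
The plan is to exploit the explicit form of $dT^{n_{2k+1} - n_{2k}}$ provided by Theorem~\ref{thm2}: by that theorem this derivative equals either $d\Phi_{1,2}\, d\Phi_{2,3}$ or $d\Phi_{2,3}\, d\Phi_{1,2}$. Inspecting the matrix blocks listed in Section~4, each of $d\Phi_{1,2}$ and $d\Phi_{2,3}$ is block upper triangular with respect to the splitting $L_1 \oplus L_2$, while only $d\Phi_{0,1}$ fails this (its lower-left block $B$ is non-zero). A product of block upper triangular matrices is again block upper triangular, so $dT^{n_{2k+1} - n_{2k}}$ preserves $L_1$.

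I would then decompose $(\delta\xi_{n_{2k}}, \delta\eta_{n_{2k}}) = a + b$ with $a = (\delta\xi_{n_{2k}}, 0) \in L_1$ and $b = (0, \delta\eta_{n_{2k}}) \in L_2$, and set $a' = dT^{n_{2k+1} - n_{2k}} a$, $b' = dT^{n_{2k+1} - n_{2k}} b$. By the previous paragraph $a' \in L_1$, so $a' = (a'_\xi, 0)$. Writing $b' = (b'_\xi, b'_\eta)$, the $L_1 \oplus L_2$-decomposition of $a' + b'$ is $(a'_\xi + b'_\xi, 0) + (0, b'_\eta)$, hence
\begin{align}
Q(a' + b') = \langle a'_\xi + b'_\xi, b'_\eta \rangle = \langle a'_\xi, b'_\eta \rangle + Q(b').\nonumber
\end{align}
The cross term is pinned down by symplecticity: since $dT^{n_{2k+1} - n_{2k}}$ preserves $\omega$ and $a'_\eta = 0$,
\begin{align}
\langle a'_\xi, b'_\eta \rangle = \omega(a', b') = \omega(a, b) = \langle \delta\xi_{n_{2k}}, \delta\eta_{n_{2k}} \rangle = q_{n_{2k}}.\nonumber
\end{align}

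Combining both identities and invoking Theorem~\ref{thm2} applied to $b \in L_2$ gives
\begin{align}
q_{n_{2k+1}} = Q(a' + b') = q_{n_{2k}} + Q(b') > q_{n_{2k}} + \Lambda\, \|(0, \delta\eta_{n_{2k}})\|^2,\nonumber
\end{align}
which is the desired inequality. The actual obstacle lies not in this short linear-algebraic manipulation but in Theorem~\ref{thm2}: one has to locate the subsequence $(n_k)$ along which the two intermediate Poincar\'e steps involve only the ``upper'' collision pairs $(1,2)$ and $(2,3)$ and never the floor, and to produce a uniform expansion constant $\Lambda$ for vectors in $L_2$. That analysis, carried out via the wedge model, is postponed to Sections~7 and~8.
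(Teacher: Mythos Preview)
Your proof is correct and follows essentially the same route as the paper: both exploit that $dT^{n_{2k+1}-n_{2k}}$ is block upper triangular (hence $L_1$-preserving), decompose the vector along $L_1\oplus L_2$, and identify the cross term with $q_{n_{2k}}$. The only cosmetic difference is that the paper evaluates the cross term by the explicit matrix identity $\langle M_1M_2\,\delta\xi,\ M_1^T M_2^T\,\delta\eta\rangle=\langle\delta\xi,\delta\eta\rangle$ (which rests on $M_i^2=\mathrm{id}$), whereas you obtain it from the symplectic invariance $\omega(a',b')=\omega(a,b)$; these are the same fact in two guises.
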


\begin{proof}
	Without loss of generality let 
	$d_{T^{n_{2k}}x}T^{n_{2k+1} - n_{2k}}$ be the product of $d\Phi_{1,2}d\Phi_{2,3}$. Using (\ref{esta1}), we estimate
	\begin{eqnarray}
	q_{n_{2k+1}} &=& Q(d_{T^{n_{2k}}x}T^{n_{2k+1} - n_{2k}}(\delta \xi_{n_{2k}}, \delta \eta_{n_{2k}}))\nonumber\\[0.2cm]
	&=& Q\Bigl(
	\begin{pmatrix}
	M_1M_2 & M_1U_2 + U_1M_2^T\\
	0 & M_1^T M_2^T
	\end{pmatrix}
	\begin{pmatrix}
	\delta \xi_{n_{2k}}\\
	\delta \eta_{n_{2k}}
	\end{pmatrix}\nonumber
	\Bigr)\nonumber \\[0.2cm]
	&=& \langle M_1M_2 \delta \xi_{n_{2k}} + (M_1U_2 + U_1M_2^T)\delta \eta_{n_{2k}},\ M_1^T M_2^T \delta \eta_{n_{2k}} \rangle 
	\nonumber\\[0.2cm]
	&=& \langle M_1M_2 \delta \xi_{n_{2k}},\ M_1^T M_2^T \delta \eta_{n_{2k}} \rangle + 
	Q\Bigl(
	\begin{pmatrix}
	M_1M_2 & M_1U_2 + U_1M_2^T\\
	0 & M_1^T M_2^T
	\end{pmatrix}
	\begin{pmatrix}
	0\\
	\delta \eta_{n_{2k}}
	\end{pmatrix} \Bigr)\nonumber \\[0.2cm]
	&>& \langle \delta \xi_{n_{2k}},\ \delta \eta_{n_{2k}}\rangle + \Lambda \|(0, \delta \eta_{n_{2k}})\|^2\nonumber\\[0.2cm] 
	&=& q_{n_{2k}} + \Lambda \|(0, \delta \eta_{n_{2k}})\|^2.\nonumber
	\end{eqnarray}
\end{proof}

\begin{prop}\label{prop6}
	Let $(a_{n_k})_{k \in \mathbb{N}}$ be a sequence of positive numbers and $C$ a positive constant. If
	\begin{align} 
	\sum_{i = 0}^{+\infty}a_{n_{2i}}= +\infty \text{ then } \sum_{k = 0}^{+\infty}
	\frac{a_{n_{2k}}}{C +\sum_{i=0}^{k} a_{n_{2i}}} = +\infty.\nonumber
	\end{align} 
\end{prop}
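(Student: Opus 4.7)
The plan is to reduce the claim to a classical block-decomposition argument for series of the form $\sum b_k / S_k$ where $S_k$ denotes a divergent sequence of partial sums. Concretely, set $b_k = a_{n_{2k}}$ and $S_k = C + \sum_{i=0}^k b_i$. Since the $b_k$ are positive, $(S_k)$ is strictly increasing, and the hypothesis $\sum_i a_{n_{2i}} = +\infty$ ensures $S_k \to +\infty$. The goal is then exactly to show $\sum_{k\ge 0} b_k / S_k = +\infty$.

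First, I would construct a subsequence of indices $0 = k_0 < k_1 < k_2 < \cdots$ with the property that $S_{k_{j+1}} \ge 2\, S_{k_j}$ for every $j \ge 0$. This is possible precisely because $S_k \to +\infty$: given $k_j$, simply take $k_{j+1}$ to be the smallest index for which $S_{k_{j+1}} \ge 2 S_{k_j}$.

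Second, on each block $k_j < k \le k_{j+1}$, I would use the monotonicity $S_k \le S_{k_{j+1}}$ to bound
\begin{align}
\sum_{k=k_j+1}^{k_{j+1}} \frac{b_k}{S_k} \;\ge\; \frac{1}{S_{k_{j+1}}} \sum_{k=k_j+1}^{k_{j+1}} b_k \;=\; \frac{S_{k_{j+1}} - S_{k_j}}{S_{k_{j+1}}} \;=\; 1 - \frac{S_{k_j}}{S_{k_{j+1}}} \;\ge\; \frac{1}{2},\nonumber
\end{align}
where the last inequality uses $S_{k_{j+1}} \ge 2 S_{k_j}$. Summing this estimate over all blocks $j \ge 0$ gives a lower bound of $\sum_{j\ge 0} \tfrac{1}{2} = +\infty$, which yields the desired divergence.

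The argument is essentially routine and I do not anticipate any genuine obstacle: the only place where the hypothesis is used is to ensure that the doubling sequence $(k_j)$ can be constructed, and the rest is a straightforward telescoping estimate. One could alternatively give a proof-by-contradiction using the identity $\prod_{k=0}^N S_{k-1}/S_k = C/S_N \to 0$ together with the inequality $\log(1-x) \ge -2x$ on $[0,1/2]$, but the block argument is shorter and more transparent.
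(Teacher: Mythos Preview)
Your proof is correct and rests on the same telescoping idea as the paper's: bound each denominator $S_k$ in a block from above by the largest one in that block, so that the block sum becomes $(S_{\text{top}}-S_{\text{bottom}})/S_{\text{top}}$. The difference is only in packaging. The paper avoids constructing the doubling subsequence altogether: it simply fixes an arbitrary $j$, lets the upper endpoint $l\to\infty$, and observes that
\[
\sum_{k=j}^{l}\frac{a_{n_{2k}}}{C+\sum_{i=0}^{k}a_{n_{2i}}}\;\ge\;\frac{\sum_{k=j}^{l}a_{n_{2k}}}{C+\sum_{i=0}^{l}a_{n_{2i}}}\longrightarrow 1,
\]
so the tails of the series do not tend to zero and the series must diverge. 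Your block decomposition gives a slightly stronger quantitative statement (each block contributes at least $\tfrac12$), at the cost of the extra step of building the indices $k_j$; the paper's version is a one-liner.
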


\begin{proof}
	For $1 \leq j \leq l$, we have
	\begin{align}
	\sum_{k = j}^{l}\frac{a_{n_{2k}}}{C + \sum_{i=0}^{k} a_{n_{2i}}} > 
	\frac{\sum_{k = j}^{l}a_{n_{2k}}}{C + \sum_{i=0}^{j-1} a_{n_{2i}} + \sum_{i=j}^{l} a_{n_{2i}}}
	\to 1,\ \text{as}\ l \to +\infty.\nonumber
	\end{align}
	The tail of the series does not tend to zero, hence the series diverges.
\end{proof}

Consider the subsequence $(q_{n_{2k-1}})_{k \in \mathbb{N}}$ introduced in (\ref{thisisit}). Since $\prod_{k = 1}^{+ \infty} q_{n_{2k-1}} / q_{n_{2k-2}} = + \infty$ implies $\lim_{n \to +\infty}q_{n_{2k-1}}  = + \infty$, we will estimate
\begin{align}
\prod_{k = 1}^{+ \infty} \frac{q_{n_{2k-1}}}{q_{n_{2k-2}}} \geq \prod_{k = 1}^{+ \infty} 1 + r_k, \nonumber
\end{align}
and further prove, that $\sum_{k = 1}^{+ \infty} r_k = + \infty$, which yields the unboundedness.

Before we start with the proof of property (\ref{point2}) we need to recall and calculate some 
preliminary necessities: 
\begin{enumerate}
\item From the definition of the monodromy maps, 
we immediately obtain
\begin{align}\label{maps}
\begin{array}{rcl}
d\Phi_{0,1}(\delta \xi_{n-1}, \delta \eta_{n-1}) &= &
\begin{pmatrix}
\delta \xi_{n-1} \\
B \delta \xi_{n-1} + \delta \eta_{n-1}
\end{pmatrix}
=
\begin{pmatrix}
\delta \xi_n\\
\delta \eta_n
\end{pmatrix},\\[0.4cm]
d\Phi_{i,i+1}(\delta \xi_{n-1}, \delta \eta_{n-1}) &= & 
\begin{pmatrix}
M_i \delta \xi_{n-1} + U_i \delta \eta_{n-1} \\
M_i^T \delta \eta_{n-1}
\end{pmatrix}
=
\begin{pmatrix}
\delta \xi_n\\
\delta \eta_n
\end{pmatrix},\ i= 1,2.
\end{array}
\end{align}

\item Cheng and Wojtkowski introduced 
in \cite{ChW91} the norm
\begin{align}
\|\delta \xi\|_{CW}^2 = \sum_{i = 1}^2 \frac{(\delta \xi_{i+1} - \delta \xi_{i})^2}{m_i}.\nonumber
\end{align}
The maps $M_i$ are invariant with respect to this norm, i.e. 
\begin{align}\label{CW}
\|M_i \delta \xi\|_{CW} = \|\delta \xi\|_{CW}. 
\end{align}

\item The equivalence of norms gives us constants $D_1, D_2> 0$, such that
\begin{align}\label{p1}
&D_1 \|\delta \xi\|_{\max} \leq \|\delta \xi\|_{CW} \leq D_2 \|\delta \xi\|_{\max},
\end{align}
where $\|\cdot\|_{\max}$ denotes the maximum norm. 

\item Using the definitions of the Hamiltonian and the terms $\alpha_i$ (\ref{alpha}), we calculate
\begin{align}
	\max\{\alpha_1,\alpha_2\} \leq \frac{4\sqrt{2c}m_1^3}{m_3^2\sqrt{m_3}},
\end{align}
where $c > 0$ is the energy of the system.
\item Let $(i,i+1)$, $i = 0,1,2$, stand for a collision of ball $i$ with ball $i+1$, 
i.e. when $q_i = q_{i+1}$. When $i = 0$ the system experiences a collision with the floor.
\end{enumerate}
\begin{proof}[Proof of property (\ref{point2})]
The proof is based on the scheme given in \cite[p. 32 - 33]{LW92}.\\
We first give an estimate for $\|\delta \xi_{n_{2k-1}}\|_{CW}$ in 
between points $T^{n_{2k-1}}x$ and $T^{n_{2k-2}}x$. Without loss of generality we set $d_{T^{n_{2k-2}}x}T^{n_{2k-1} - n_{2k-2}}$ 
to be the product of $d\Phi_{1,2}d\Phi_{2,3}$ for every $k \in \mathbb{N}$. We estimate
\begin{eqnarray}\label{p2}
\|\delta \xi_{n_{2k-1}}\|_{CW} &=& \|M_1M_2 \delta \xi_{n_{2k-2}} + (M_1U_2 + U_1M_2^T)\delta \eta_{n_{2k-2}}\|_{CW}\nonumber\\
&\leq& \|\delta \xi_{n_{2k-2}}\|_{CW} + \|(M_1U_2 + U_1M_2^T) \delta \eta_{n_{2k-2}}\|_{CW}\nonumber\\
&\leq& \|\delta \xi_{n_{2k-2}}\|_{CW} + D_2 \|
\begin{pmatrix}
\alpha_1 & (1+\gamma_1)\alpha_2 + (1-\gamma_2)\alpha_1 \\
0 & \alpha_2
\end{pmatrix}
\delta \eta_{n_{2k-2}}\|_{\max}\nonumber\\
&\leq& \|\delta \xi_{n_{2k-2}}\|_{CW} + D_2 3 \max\{\alpha_1, \alpha_2\} \|\delta\eta_{n_{2k-2}}\|_{\max}\nonumber\\
&\leq& \|\delta \xi_{n_{2k-2}}\|_{CW} + \frac{D_212\sqrt{2c}m_1^3}{m_3^2\sqrt{m_3}} \|\delta\eta_{n_{2k-2}}\|_{\max}
\end{eqnarray}
We abbreviate the constant factor in the last inequality by
\begin{align}
K =  \frac{D_212\sqrt{2c}m_1^3}{m_3^2\sqrt{m_3}}.\nonumber
\end{align}
In between points $T^{n_{2k}}x$ and $T^{n_{2k-1}}x$ we have one of the following situations: Either a floor collision occurs, 
in which $\|\delta \xi_{n_{2k}}\|_{CW} = \|\delta \xi_{n_{2k}-1}\|_{CW}$ or a ball to ball collision occurs, in which 
$\|\delta \xi_{n_{2k}}\|_{CW} \leq \|\delta \xi_{n_{2k}-1}\|_{CW} + \|U_{\kappa(n_{2k}-1)}\delta \eta_{n_{2k}-1}\|_{CW}$ (see (\ref{maps})). Thereby, $\kappa: \mathbb{N} \to \{1,2\}$, depends on the point and describes whether we have a (1,2) or (2,3) collision.
Combining this with (\ref{p2}) we obtain
\begin{align}\label{use}
\|\delta \xi_{n_{2k}}\|_{CW} \leq \|\delta \xi_{n_0}\|_{CW} + \sum_{i=1}^{k}\sum_{j \in I_i} \|U_{\kappa(n_{2i}-j)}\delta \eta_{n_{2i}-j}\|_{CW} + K\sum_{i=1}^{k}\|\delta \eta_{n_{2i-2}}\|_{CW},
\end{align}
where $\left\vert{I_i}\right\vert$ are the number of ball to ball collisions happening between points 
$T^{n_{2i}}x$ and $T^{n_{2i-1}}x$. If $\left\vert{I_i}\right\vert = 0$, then we set 
$\|U_{\kappa(n_{2i})}\delta \eta_{n_{2i}}\|_{CW} = 0$.

The Cauchy-Schwarz inequality gives us
\begin{align}
q_{n_k} = \langle \delta \xi_{n_k}, \delta \eta_{n_k}\rangle \leq \|\delta \xi_{n_k}\|\|\delta \eta_{n_k}\|,\nonumber
\end{align}
which yields
\begin{align}\label{p4}
\|\delta \eta_{n_k}\| \geq \frac{q_{n_k}}{\|\delta \xi_{n_k}\|}.
\end{align}
From Proposition \ref{prop5} and the Cauchy-Schwarz inequality, we get
\begin{align}
q_{n_{2k+1}} &> q_{n_{2k}} + \Lambda \|\delta \eta_{n_{2k}}\|_{\max}^2\nonumber\\
&\geq q_{n_{2k}} +  \Lambda \|\delta \eta_{n_{2k}}\|_{\max} \frac{q_{n_{2k}}}{\|\delta \xi_{n_{2k}}\|_{\max}}\nonumber\\
&\geq q_{n_{2k}} \Bigl( 1 + \Lambda \frac{D_1\|\delta \eta_{n_{2k}}\|_{\max}}{\|\delta \xi_{n_{2k}}\|_{CW}}\Bigr).\nonumber
\end{align}
Utilizing the above, we estimate
\begin{align}
\frac{q_{n_{2k+1}}}{q_{n_{2k}}} \geq 1 + \frac{\Lambda D_1\|\delta \eta_{n_{2k}}\|_{\max}}{\|\delta \xi_{n_0}\|_{CW} + \sum_{i=1}^{k}\sum_{j \in I_i} \|U_{\kappa(n_{2i}-j)}\delta \eta_{n_{2i}-j}\|_{CW} + K\sum_{i=1}^{k}\|\delta \eta_{n_{2i-2}}\|_{CW}}. \nonumber
\end{align}

Let 
\begin{align}
r_k = \frac{\Lambda D_1\|\delta \eta_{n_{2k}}\|_{\max}}{\|\delta \xi_{n_0}\|_{CW} + \sum_{i=1}^{k}\sum_{j \in I_i} \|U_{\kappa(n_{2i}-j)}\delta \eta_{n_{2i}-j}\|_{CW} + K\sum_{i=1}^{k}\|\delta \eta_{n_{2i-2}}\|_{CW}}.
\nonumber
\end{align}
Without loss of generality assume\footnote{If the sum is infinite, then we can apply the argument in  
\cite[p. 32 - 33]{LW92} directly. The key point is, that we do not have control over this sum, so we 
assume the worst case, namely, its finiteness.} that the sum $\sum_{i=1}^{+\infty}\sum_{j \in I_i} \|U_{
\kappa(n_{2i}-j)}\delta \eta_{n_{2i}-j}\|_{CW}$ is finite. The only thing left to show is that $\sum_{k 
= 1}^{+ \infty}r_k = +\infty$.  In view of Proposition \ref{prop6}, it will follow once we prove, that 
$\sum_{i = 0}^{+\infty}\|\delta \eta_{n_{2i}}\|_{\max} = +\infty$. Assume on the contrary, that this is 
not true. 
Then, by (\ref{p2}), the sequence $(\|\delta \xi_{n_{2k-1}}\|_{CW})_{k \in \mathbb{N}}$ is bounded from 
above. This 
and (\ref{p4}) imply, that $(\|\delta \eta_{n_{2k}}\|_{\max})_{k \in \mathbb{N}}$ is bounded away from zero, which contradicts our assumption. This yields the unboundedness.
\end{proof}

\section{Particle falling in a wedge}
Wojtkowski analyzed in \cite{W98} the hyperbolicity of a particle moving along parabolic trajectories 
in a variety of wedges. The particle is subject to constant acceleration and collides with the walls 
of the wedge. We adopt his notation and call such a system particle falling in a wedge- or, 
abbreviated, PW system.
Heuristically speaking, for special wedges, namely simple ones, the PW system is equivalent to a 
falling balls system (or FB system) with particular masses. 
After introducing the basic setup in three dimensions we are going to recall and expand some of 
the results in \cite{W98} in order to prove Theorem \ref{thm2} and property (\ref{point3}) 
in section \ref{eight}.

Let $E$ be the three dimensional Euclidean space. For three linearly independent vectors 
$\{e_1,e_2,e_3\}$ we define the wedge $W(e_1,e_2,e_3) \subset E$ by
\begin{align}
W(e_1,e_2,e_3) = \{e \in E:\ e = \lambda_1e_1 + \lambda_2e_2 + \lambda_3e_3,\ \lambda_i \geq 0,\
i = 1,2,3\}.\nonumber
\end{align}
The set of vectors $\{e_1,e_2,e_3\}$ are called the generators of the wedge. We denote by 
$S(e_1,\ldots,e_i)$, $1 \leq i \leq 3$, the linear subspace spanned by the linearly independent vectors 
$\{e_1,\ldots,e_i\}$. A three dimensional wedge is called simple, if the generators can be ordered in 
such a way that the orthogonal projection of $e_1$ resp. $e_2$ onto $S(e_2,e_3)$ resp. $S(e_3)$ is a 
positive multiple of $e_2$ resp. $e_3$. The simplicity of a wedge can be verified with the following
\begin{prop}[Proposition 2.3, \cite{W98}]\label{prop2.3}
Let $\{e_1,e_2,e_3\}$ be a set of linearly independent unit vectors. The wedge $W(e_1,e_2,e_3)$ 
is simple if and only if 
\begin{subequations}
		\begin{eqnarray}
		&&\langle e_i, e_{i+1} \rangle > 0,\ i =1,2,\label{point4}\\
		&&\langle e_1, e_3 \rangle = \langle e_1, e_2 \rangle \langle e_2, e_3 \rangle.\label{point5}
		\end{eqnarray}
	\end{subequations} 
\end{prop}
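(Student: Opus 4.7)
The plan is to compute the orthogonal projections appearing in the definition of a simple wedge and show that the two requirements (that the projection of $e_1$ onto $S(e_2,e_3)$ be a positive multiple of $e_2$, and that the projection of $e_2$ onto $S(e_3)$ be a positive multiple of $e_3$) translate directly into conditions (\ref{point4}) and (\ref{point5}).

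First, since $e_3$ is a unit vector, the orthogonal projection of $e_2$ onto $S(e_3)$ equals $\langle e_2,e_3\rangle e_3$. Requiring this to be a positive multiple of $e_3$ is therefore equivalent to $\langle e_2,e_3\rangle > 0$, which is condition (\ref{point4}) for $i=2$.

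Next, I would write the orthogonal projection of $e_1$ onto $S(e_2,e_3)$ in the form $\alpha e_2 + \beta e_3$. The orthogonality of $e_1 - \alpha e_2 - \beta e_3$ to both $e_2$ and $e_3$, together with $\|e_i\|=1$, yields the $2\times 2$ linear system
\begin{align*}
\langle e_1,e_2\rangle &= \alpha + \beta\,\langle e_2,e_3\rangle,\\
\langle e_1,e_3\rangle &= \alpha\,\langle e_2,e_3\rangle + \beta.
\end{align*}
Demanding that this projection be a positive multiple of $e_2$ forces $\beta=0$ and $\alpha>0$. Substituting $\beta=0$ back into the system gives $\alpha = \langle e_1,e_2\rangle$ together with the compatibility relation $\langle e_1,e_3\rangle = \langle e_1,e_2\rangle\langle e_2,e_3\rangle$. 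Hence $\alpha>0$ reproduces (\ref{point4}) for $i=1$, and the compatibility relation is exactly (\ref{point5}). Conversely, if (\ref{point4}) and (\ref{point5}) both hold, then setting $\alpha = \langle e_1,e_2\rangle > 0$ and $\beta=0$ satisfies the orthogonality system, so $\alpha e_2$ is the orthogonal projection of $e_1$ onto $S(e_2,e_3)$ and it is a positive multiple of $e_2$ as required.

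There is no serious obstacle here; the proof reduces to a direct linear-algebra calculation. The one observation worth emphasizing is that (\ref{point5}) is not an independent assumption but precisely the compatibility equation that emerges after eliminating $\beta$ from the orthogonality system, which is what gives the characterization its clean algebraic form.
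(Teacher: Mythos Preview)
Your argument is correct: the two projection requirements in the definition of a simple wedge reduce, via the linear system you wrote down, to exactly the inner-product conditions (\ref{point4}) and (\ref{point5}), and your converse direction is clean. Note that the paper does not supply its own proof of this proposition---it is quoted as Proposition~2.3 of \cite{W98}---so there is nothing to compare against here; your direct linear-algebra computation is the natural proof and matches what one would expect in the original reference.
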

The angles $\alpha_i = \sphericalangle (e_i,e_{i+1})$, $i = 1,2$,
completely determine the geometry of the wedge. In a simple wedge the angles satisfy 
$0 < \alpha_i < \frac{\pi}{2}$ and if $\{e_1,e_2,e_3\}$ are unit vectors we have
\begin{align}\label{2.1}
\cos \alpha_i = \langle e_i, e_{i+1} \rangle.
\end{align} 
We also give another geometric characterization of the wedge by introducing a second pair of angles $
\beta_1$, $\beta_2$. 
Thereby, $\beta_i$ is the angle between subspaces $S(e_i, e_{i+2})$ and $S(e_{i+1}, e_{i+2})$, 
where for $i = 2$, we set $\beta_2 = \alpha_2$. If the wedge is simple, they satisfy 
$0 < \beta_i < \frac{\pi}{2}$. 
The relation between $\beta_1$ and $\alpha_1, \alpha_2$ is given by
\begin{align}\label{2.2}
\tan \beta_1 = \frac{\tan \alpha_1}{\sin \alpha_2}.
\end{align}

Consider the FB system from Section 2. Its Hamiltonian is given by 
$H(q,p) = \frac{1}{2}\langle Kp,p \rangle + \langle c_1, q\rangle$, 
$K = diag(\frac{1}{m_1},\frac{1}{m_2},\frac{1}{m_3})$, 
$c_1 = (m_1, m_2, m_3)$. Thereby, $K$ is the diagonal matrix with 
diagonal entries $\frac{1}{m_1},\frac{1}{m_2},\frac{1}{m_3}$. The unit vectors
\begin{align}
e_1 = \frac{1}{\sqrt{3}}
\begin{pmatrix}
1\\
1\\
1
\end{pmatrix},
e_2 = \frac{1}{\sqrt{2}}
\begin{pmatrix}
0\\
1\\
1
\end{pmatrix},
e_3 = 
\begin{pmatrix}
0\\
0\\
1
\end{pmatrix}\nonumber
\end{align}
span the configuration space
\begin{align}
W_q(e_1,e_2,e_3) = \{(q_1,q_2,q_3) \in \mathbb{R}^3: 0 \leq q_1 \leq q_2 \leq q_3\}.\nonumber
\end{align}
It carries the natural Riemannian metric given by the kinetic energy 
$\langle K \cdot, \cdot \rangle$. We subject the system to the coordinate transformation
\begin{align}\label{localcoordinates} 
x_i = \sqrt{m_i}q_i,\ w_i = \frac{p_i}{\sqrt{m_i}},
\end{align} 
and obtain the Hamiltonian 
$H(x,w) = \frac{1}{2}\langle w,w \rangle + \langle c_2,x \rangle$, 
$c_2 = (\sqrt{m_1}, \sqrt{m_2}, \sqrt{m_3})$. The natural Riemannian metric in these 
coordinates is the standard Euclidean inner product. The new generators of length one are
\begin{align}\label{h}
h_1 = \frac{1}{\sqrt{M_1}}
\begin{pmatrix}
\sqrt{m_1}\\
\sqrt{m_2}\\
\sqrt{m_3}
\end{pmatrix},
h_2 = \frac{1}{\sqrt{M_2}}
\begin{pmatrix}
0\\
\sqrt{m_2}\\
\sqrt{m_3}
\end{pmatrix},
h_3 = 
\begin{pmatrix}
0\\
0\\
1
\end{pmatrix},
\end{align}
where $M_i = m_i + \cdots + m_3$, $i = 1,2$. The configuration space changes to 
\begin{align}\label{Wx}
W_x(h_1,h_2,h_3) = \{(x_1,x_2,x_3) \in \mathbb{R}^3:\ 0 \leq \frac{x_1}{\sqrt{m_1}} 
\leq \frac{x_2}{\sqrt{m_2}} \leq \frac{x_3}{\sqrt{m_3}}\}.
\end{align}
With respect to the Euclidean inner product we have 
\begin{align}
\langle h_i, h_j\rangle = \frac{\sqrt{M_j}}{\sqrt{M_i}},\ 1 \leq i < j \leq 3,\nonumber
\end{align}
which immediately yields properties (\ref{point4}), (\ref{point5}) from Proposition \ref{prop2.3}, 
proving that $W_x(h_1,h_2,h_3)$ is a simple wedge. Further, using properties (\ref{2.1}) and 
(\ref{2.2}) we get a direct link between the angles 
characterizing the wedge and the masses of the FB system
\begin{align}\label{3.4}
\cos^2 \alpha_i = \frac{M_{i+1}}{M_i},\ \sin^2 \alpha_i = \frac{m_i}{M_i},\ 
\tan^2 \beta_i = \frac{m_i}{m_{i+1}}.
\end{align}
Notice, that the direction of the 
acceleration vector is along the first generator. We arrived at the important conclusion, that a PW 
system in a simple wedge with acceleration vector along the first generator is equivalent to a FB 
system with appropriate masses.

\subsection{Wide wedges}
\begin{definition}
A three dimensional wedge with generators $\{g_1,g_2,g_3\}$ is wide if the angle of the generators 
exceeds $\pi/2$, i.e. $\langle g_i,g_j \rangle < 0,$ $1 \leq i < j \leq 3$.
\end{definition}

Consider a PW system in a simple wedge $W_x(h_1,h_2,h_3)$ (\ref{Wx}). 
We will unfold $W_x(h_1,h_2,h_3)$ to a wide wedge by continuously reflecting 
it in the faces, which 
are equipped with the first generator, i.e. $W(h_1,h_2)$ and $W(h_1,h_3)$.  
It is not hard to see, that this 
procedure creates a wide wedge if and only if the angle between the subspaces $S(h_1,h_2)$ 
and $S(h_1,h_3)$ is exactly\footnote{Otherwise the unfolded simple wedges would overlap.} 
$\pi/3$. This translates to the condition
\begin{align}\label{angle}
\frac{1}{2} = \cos \frac{\pi}{3} = \langle n_{S(h_1,h_2),0}, n_{S(h_1,h_3),0} \rangle,
\end{align}
where $n_{S(h_1,h_2),0}$ resp. $n_{S(h_1,h_3),0}$ are the unit normal vectors of the 
subindexed subspace. Using (\ref{h}) in (\ref{angle}) we obtain for the appropriate masses of the 
corresponding FB system 
\begin{align}\label{masses}
2\sqrt{m_1}\sqrt{m_3} = \sqrt{m_1+m_2}\sqrt{m_2+m_3}.
\end{align}
In this way we obtain new generators $\{g_1,g_2,g_3\}$ and the wedge $W_x(g_1,g_2,g_3)$, 
which consists exactly of six simple wedges. 
With the help of (\ref{h}) and elementary linear algebra it follows rather easily that the wedge 
$W_x(g_1,g_2,g_3)$ is wide. 

The two dimensional inner faces of the simple wedges possessing the first generator $h_1$ correspond 
to a collision of two balls in the associated FB system. 
When the particle hits one of the inner faces we allow the particle to pass through the face to the 
adjacent wedge. 

A collision of the particle with one of the faces of the wide wedge 
corresponds to a collision with the floor in the 
associated FB system. In this case, we do not allow the particle to pass through the face, 
but instead reflect the velocity vector across the face by using $w_1^+=-w_1^-$. 

Since the trajectory is parabolic, a natural question to ask is, whether or not grazing collisions 
can occur. For our purposes we will confine ourselves to the simple wedge $W_x(h_1,h_2,h_3)$. 
The definition of a grazing collision is as follows
\begin{definition}
A collision of the trajectory $x(t)$, at time $t_0$, with one of the faces of the simple wedge 
$W_x(h_1,h_2,h_3)$ is grazing, if the velocity vector $\dot{x}(t_0)$ lies in the face of 
collision.
\end{definition}
The next result gives equivalent conditions of a grazing collision 
with one of the faces possessing the first generator.
\begin{prop}\label{4}
	Let $t_0 < t_1$ be consecutive collision times of the trajectory in the simple wedge 
	$W_x(h_1,h_2,h_3)$ 
	and assume that $x(t_1) \in W_x(h_1,h_2)$ or $x(t_1) \in W_x(h_1,h_3)$.
The following statements are equivalent:
\begin{align}
	&\text{1. A collision with the face } W_x(h_1,h_2)\ \text{resp. } W_x(h_1,h_3), 
	\text{ at time } t_1, \text{ is grazing.}
	\nonumber\\
	&\text{2. The differences } \frac{w_1^+(t_0)}{\sqrt{m_1}} - \frac{w_2^+(t_0)}{\sqrt{m_2}}\ \text{
	resp. } \frac{w_2^+(t_0)}{\sqrt{m_2}} - \frac{w_3^+(t_0)}{\sqrt{m_3}}\ \text{are equal to zero}.
	\nonumber\\
	&\text{3. The trajectory segment } \{x(t): t \in [t_0,t_1]\}\ \text{is confined to } W_x(h_1,h_2)\ 
	\text{resp. }	W_x(h_1,h_3).\nonumber
\end{align}
\end{prop}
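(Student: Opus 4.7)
The plan is to reduce everything to the coordinates $(q_i,v_i)=(x_i/\sqrt{m_i},w_i/\sqrt{m_i})$, in which Hamilton's equations read $\ddot q_i=-1$. In particular, between consecutive collisions each relative velocity $v_i-v_j$ is \emph{conserved}, while each relative position $q_i-q_j$ evolves linearly in time. Using the explicit form (\ref{h}) of the generators, a direct computation shows that the two interior faces are cut out by very simple linear equations,
\begin{align}
W_x(h_1,h_2)=\{x:\, q_2=q_3\},\qquad W_x(h_1,h_3)=\{x:\, q_1=q_2\},\nonumber
\end{align}
and a tangent vector to such a face is characterised precisely by the equality $v_k=v_{k+1}$ of the associated adjacent ``velocity'' components. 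After this identification I would establish the three-way equivalence by the cycle $(1)\Rightarrow(2)\Rightarrow(3)\Rightarrow(1)$.

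For $(1)\Rightarrow(2)$: grazing at $t_1$ means the pre-collision velocity $\dot x(t_1^-)$ lies in the face, which in $(q,v)$-coordinates is exactly $v_k(t_1)=v_{k+1}(t_1)$ for the relevant $k$. Because $v_i(t)=v_i^+(t_0)-(t-t_0)$ on the open interval $(t_0,t_1)$, the difference $v_k-v_{k+1}$ is constant there, so the equality at $t_1$ transfers to $t_0^+$; rewriting in terms of the $w$'s produces the statement in 2.

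For $(2)\Rightarrow(3)$: if $v_k^+(t_0)=v_{k+1}^+(t_0)$, then integration gives that $q_k(t)-q_{k+1}(t)$ is constant on $[t_0,t_1]$, and since the endpoint $x(t_1)$ lies on the face $q_k=q_{k+1}$, this constant vanishes. Hence $q_k(t)=q_{k+1}(t)$ for every $t\in[t_0,t_1]$, i.e., the whole trajectory segment is contained in the corresponding face. For $(3)\Rightarrow(1)$: confinement to the face yields $q_k(t)\equiv q_{k+1}(t)$ on $[t_0,t_1]$; differentiating and passing to $t\to t_1^-$ shows that the incoming velocity is tangent to the face, which is the definition of a grazing collision.

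The only place where anything can go subtly wrong is the matching step: one must verify carefully which of the two interior faces is $\{q_2=q_3\}$ and which is $\{q_1=q_2\}$ from (\ref{h}), and then pair it correctly with the velocity difference appearing in item 2. This is a straightforward computation of a normal vector to $S(h_1,h_j)$ in the Euclidean inner product and poses no real obstacle; once done, the rest is a transparent consequence of the uniform acceleration $\ddot q_i=-1$ and the linear evolution of the relative coordinates.
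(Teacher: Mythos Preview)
Your argument is correct and follows the same cycle $(1)\Rightarrow(2)\Rightarrow(3)\Rightarrow(1)$ as the paper, based on the same underlying observation that the differences $v_i-v_{i+1}$ are conserved between collisions. Your step $(2)\Rightarrow(3)$ is in fact cleaner than the paper's: you use directly that $x(t_1)$ lies on the face by hypothesis, so the constant value of $q_k-q_{k+1}$ on $[t_0,t_1]$ must be zero; the paper instead invokes the collision-time formula $t_1-t_0=(q_{k+1}(t_0)-q_k(t_0))/(v_k^+(t_0)-v_{k+1}^+(t_0))$ together with a somewhat informal limiting argument (``if the denominator tends to zero then so does the numerator, at least with the same rate'') to reach the same conclusion.

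Your caution about the face labeling is well placed. Computing with the generators in (\ref{h}) gives exactly what you wrote, $W_x(h_1,h_2)=\{q_2=q_3\}$ and $W_x(h_1,h_3)=\{q_1=q_2\}$, whereas the statement of the proposition pairs $W_x(h_1,h_2)$ with the difference $w_1/\sqrt{m_1}-w_2/\sqrt{m_2}$ and the paper's own proof writes $W_x(h_1,h_2)=\{x_1/\sqrt{m_1}=x_2/\sqrt{m_2}\}$. So there is a harmless labeling slip in the paper; the mathematics is identical once the two faces are swapped, and your proof goes through verbatim for the corrected pairing.
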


\begin{proof}
	$1 \Rightarrow 2$:\\
 Without loss of generality assume that 
$x(t_0) \in W_x(h_1,h_3)$ or $x(t_0) \in W_x(h_2,h_3)$. Further, let the particle experience 
a grazing collision with the face $W_x(h_1,h_2)$ at time $t_1$. In a grazing collision 
the velocity
\begin{align}
w^-(t_1) =
\begin{pmatrix}
-\sqrt{m_1}(t_1 - t_0) + w_1^+(t_0) \\
-\sqrt{m_2}(t_1 - t_0) + w_2^+(t_0) \\
-\sqrt{m_3}(t_1 - t_0) + w_3^+(t_0) 
\end{pmatrix}\nonumber
\end{align}
is parallel to the face 
\begin{align}
	W_x(h_1,h_2) = \{(x_1,x_2,x_3) \in W_x(h_1,h_2,h_3):\ \frac{x_1}{\sqrt{m_1}} = \frac{x_2}{\sqrt{m_2}}\}.
	\nonumber
\end{align}
This is equivalent to 
\begin{align}
\frac{w_1^+(t_0)}{\sqrt{m_1}} = \frac{w_2^+(t_0)}{\sqrt{m_2}}.\nonumber
\end{align}
The argument for a grazing collision with the face $W_x(h_1,h_3)$ is exactly the same.\\
$2 \Rightarrow 3$:\\
Without loss of generality assume again that 
$x(t_0) \in W_x(h_1,h_3)$ or $x(t_0) \in W_x(h_2,h_3)$ and let the particle collide with the face 
$W_x(h_1,h_2)$ at time $t_1$.
From the Hamiltonian equations, we calculate the first collision time
\begin{align}
t_1 - t_0 = \frac{x_2(t_0) / \sqrt{m_2} - x_1(t_0) / \sqrt{m_1}}
{w_1^+(t_0) / \sqrt{m_1} - w_2^+(t_0) / \sqrt{m_2}}.\nonumber
\end{align}
Since the energy is fixed, $t_1 - t_0 < \infty$. It follows, that if 
$w_1^+(t_0) / \sqrt{m_1} - w_2^+(t_0) / \sqrt{m_2} \to 0$, then 
$x_2(t_0) / \sqrt{m_2} - x_1(t_0) / \sqrt{m_1} \to 0$ (at least) with the same rate. 
Thus, in case of equal velocities, we always have 
$x_1(t_0) / \sqrt{m_1} = x_2(t_0) / \sqrt{m_2}$, which implies that the trajectory moves inside 
the face $W_x(h_1,h_2)$. \\
The argument for $w_2^+(t_0) / \sqrt{m_2} - w_3^+(t_0) / \sqrt{m_3} = 0$ is exactly the same.\\
$3 \Rightarrow 1$:\\
This direction is immediate.
\end{proof}

\subsection{Projection} The Hamiltonian equations imply that the flow is an inverted parabola. 
Let $[t_0,t_c]$ be the time from one collision to the next. 
We define the planar subspace
\begin{align}\label{planar}
\mathsf{P}_{x([t_0,t_c])} = S(\dot{x}(t_1),\dot{x}(t_2)),\ 
\dot{x}(t_1) \neq \dot{x}(t_2),\ t_0 \leq t_1 < t_2 \leq t_c.
\end{align}
The movement of the parabolic trajectory is confined to the planar subspace, i.e.
\begin{align}
\{x(t):\ t \in [t_0,t_c]\} \subset \mathsf{P}_{x([t_0,t_c])}.\nonumber
\end{align}
The acceleration vector $a =  \ddot{x}(t)$ is always element of 
$\mathsf{P}_{x([t_0,t_c])}$: Set
\begin{align}
n_p(t,t_1) = \dot{x}(t) \times \dot{x}(t_1),\ \dot{x}(t) \neq \dot{x}(t_1).\nonumber
\end{align}
Since the trajectory moves inside a planar subspace, $n_p(t,t_1)$ is constant for all choices 
$t, t_1 \in [t_0, t_c]$, $t \neq t_1$. Thus, $\dot{n}_p(t,t_1) = 0$.
Observe, that 
\begin{align}\label{normal}
\langle n_{x(t)}, \dot{x}(t) \rangle = 0,\ \forall\ t \in [t_0,t_c],
\end{align}
where $n_{x(t)}$ is a normal vector to $\dot{x}(t)$ at point $x(t)$. Differentiating (\ref{normal}) 
gives
\begin{align}
\langle n_{x(t)}, \ddot{x}(t) \rangle = -\langle \dot{n}_{x(t)}, \dot{x}(t) \rangle.\nonumber
\end{align}
Substituting $\ddot{x}(t)$ with $a$ and $n_{x(t)}$ with $n_p(t,t_1)$ gives
\begin{align}
 \langle a, n_p(t,t_1)\rangle = - \langle \dot{n}_p(t,t_1), \dot{x}(t) \rangle = 0.\nonumber
\end{align}

We will use this fact to project the configuration space $W_x(g_1,g_2,g_3)$ along the first generator 
$h_1$ 
to the plane spanned by the normal vectors $n_{S(h_1,h_2)}$, $n_{S(h_1,h_3)}$ of the subspaces 
$S(h_1,h_2)$, $S(h_1,h_3)$. The projected configuration space becomes an 
equilateral triangle. Its algebraic form is 
given by 
\begin{align}\label{triangle}
\bigtriangleup:\ \sqrt{m_1}x_1 + \sqrt{m_2}x_2 + \sqrt{m_3}x_3 = d,\ d > 0,
\end{align}
where $d$ determines its displacement from the origin. Since the acceleration vector lies in the 
plane spanned by two velocity vectors of the flow, the parabola projected to $\bigtriangleup$ becomes a 
straight line (see Figure 1).\\
\begin{center}
\begin{tikzpicture}[scale=2]
	\draw (0,0) -- (2,0);
	\draw (0,0) -- ($(1,{sqrt(3)})$);
	\draw (2,0) -- ($(1,{sqrt(3)})$);
    \draw (0,0) -- ($(1.5,{sqrt(3)*0.5})$) node [right] {$h_2$};
    \draw (2,0) -- ($(0.5,{sqrt(3)*0.5})$) node [left] {$h_3$};
    \draw ($(1,{sqrt(3)})$) -- (1,0) node [below] {$h_4$};
    \draw[dashed,->] (0.4,0) -- ($(0.8,{sqrt(3)*0.8})$) -- (0.9,1.1);
    \node[below left] at (0,0) {$g_3$};
    \node[below right] at (2,0) {$g_1$};
    \node[above] at ($(1,{sqrt(3)})$) {$g_2$};
    \node [above right] at (0.97,0.65) {$h_1$};
    \node [align=left, below right] at (-0.2,-0.5) {Figure 1: The projected parabola\\ moving inside the projected\\ 
    	configuration space $\bigtriangleup$.};
\end{tikzpicture}
\qquad
\begin{tikzpicture}[scale=2]
\draw (0,0) -- (2,0);
\draw (0,0) -- ($(1,{sqrt(3)})$);
\draw (2,0) -- ($(1,{sqrt(3)})$);
\draw (0,0) -- ($(1.5,{sqrt(3)*0.5})$) node [right] {$(2,3)$};
\draw (2,0) -- ($(0.5,{sqrt(3)*0.5})$) node [left] {$(2,3)$};
\draw ($(1,{sqrt(3)})$) -- (1,0) node [below] {$(2,3)$};
\node[below left] at (0,0) {$(1,2)$};
\node[below right] at (2,0) {$(1,2)$};
\node[above] at ($(1,{sqrt(3)})$) {$(1,2)$};
\draw [blue] (0.8,0) -- ($(0.7,{sqrt(3)*0.7})$) node [above left] {I.};
\draw [cyan] (0.8,0) -- ($(1.1,{-sqrt(3)*1.1+2*sqrt(3)})$) node [above right] {II.};
\draw [green] (0.8,0) -- ($(1.4,{-sqrt(3)*1.4+2*sqrt(3)})$) node [above right] {III.};
\draw [teal] (0.8,0) -- ($(1.7,{-sqrt(3)*1.7+2*sqrt(3)})$) node [right] {IV.};
\node [align=left, below right] at (0,-0.5) {Figure 2: An example of \\ cases I-IV.\\};
\end{tikzpicture}
\end{center}

\subsection{Proper alignment in wide wedges}\label{special}
The idea to unfold the simple wedge $W_x$ (\ref{Wx}) into a wide wedge stems from Wojtkowski \cite{W16}. It is evident, that 
the triple collision states in the configuration space, which are represented by the first generator $h_1$, disappear in the 
wide wedge. More precise, each trajectory, which passes through the spot where $h_1$ was, has a smooth continuation. 
Since the triple collision singularity manifold is the only obstacle in proving the proper alignment condition, the system 
of a particle falling in the wide wedge, obtained for the special mass configuration (\ref{masses}), satisfies 
the proper alignment condition. However, in the simple wedge $W_x$, once a trajectory hits the corner $h_1$ it is impossible 
to continue it uniquely, since it has two images after the singular collision. The latter holds for any possible mass 
configuration. Therefore, the validity of the proper alignment condition cannot be immediately deduced from the dynamics of the 
wide wedge. It remains unknown at the moment (see Subsection \ref{state} for more details).

\section{Strict unboundedness - Part II}\label{eight}
Consider a PW system in the simple wedge $W_x(h_1,h_2,h_3)$ (\ref{Wx}) and mass 
restrictions given by (\ref{masses}). Due to the results of the last section we reflect the simple 
wedge in its faces possessing the first generator to obtain a wide wedge $W_x(g_1,g_2,g_3)$. 

For the strict unboundedness, it remains to prove Theorem \ref{thm2} and 
property (\ref{point3}) from Section 6. The latter was already proven as part of the Main Theorem 6.6 
in \cite[p. 327 - 331]{W98}. In essence, Wojtkowski proved, that every orbit will eventually hit every 
face of the wide wedge. Subsequently, this yields all necessary collisions for eventually mapping 
the Lagrangian subspaces $L_1$ and $L_2$ inside the interior of the contracting cone 
field\footnote{One can directly calculate, that all $(\delta \xi, 0) \in L_1$ get mapped into $\mathcal{C}(x)$ after at 
most three returns to the floor and all $(\delta \eta, 0) \in L_2$ as soon as the trajectory 
experiences the first two ball to ball collisions.}.

To prove Theorem \ref{thm2} we first establish how many different collisions, involving all the 
balls, are possible in between two consecutive collisions of the lowest ball with the floor. 
Using the projection to $\bigtriangleup$ (see (\ref{triangle})), we encounter the following four different possibilities 
(see Figure 2)
\begin{align}
\begin{array}{ll}\label{cases}
\text{I.} &(0,1) \longrightarrow (1,2) \longrightarrow (2,3) \longrightarrow (0,1)\\[0.1cm]
\text{II.} &(0,1) \longrightarrow (1,2) \longrightarrow (2,3) \longrightarrow (1,2) \longrightarrow (0,1)
\\[0.1cm]
\text{III.} &(0,1) \longrightarrow (2,3) \longrightarrow (1,2) \longrightarrow (0,1)\\[0.1cm]
\text{IV.} &(0,1) \longrightarrow (2,3) \longrightarrow (1,2) \longrightarrow (2,3) \longrightarrow (0,1)
\end{array}
\end{align}

\begin{proof}[Proof of Theorem \ref{thm2}]
First observe, that since every collision in the FB system happens infinitely often, at least one of 
the cases I-IV (\ref{cases}) occurs infinitely often, in every orbit of the system. Due to symmetry 
it is enough to consider only the first two cases. Without loss of generality we start at time $t_0$ 
on the face $W_x(g_3,h_4)$. In case I, the order of faces crossed by the trajectory is 
$W_x(h_1,g_3)$, $W_x(h_1,h_3)$ before the particle hits the last face $W_x(h_3,g_2)$. In case II, 
the trajectory crosses faces $W_x(h_1,g_3)$, $W_x(h_1,h_3)$, $W_x(h_1,g_2)$ before it reaches the last 
face $W_x(h_2,g_2)$. We compactly display the latter information as
\begin{align}
\begin{array}{ll} 
\mathbf{Case\ I.} & W_x(g_3,h_4) \longrightarrow W_x(h_1,g_3) \longrightarrow W_x(h_1,h_3) 
\longrightarrow W_x(h_3,g_2),\nonumber\\[0.2cm]
\mathbf{Case\ II.} & W_x(g_3,h_4) \longrightarrow W_x(h_1,g_3) \longrightarrow W_x(h_1,h_3) 
\longrightarrow W_x(h_1,g_2) \longrightarrow W_x(h_2,g_2).\nonumber
\end{array}
\end{align}
\textbf{Case I.} Let $t_1 < t_2 < t_3$, 
be the collision times with the faces $W_x(h_1,g_3)$, $W_x(h_1,h_3)$ and $W_x(h_3,g_2)$. 
When the particle crosses the face $W_x(h_1,g_3)$ resp. $W_x(h_1,h_3)$, we have
\begin{align}\label{difference}
\frac{w_1^-(t_1)}{\sqrt{m_1}} - \frac{w_2^-(t_1)}{\sqrt{m_2}} > 0\quad \text{resp.}\quad 
\frac{w_2^-(t_2)}{\sqrt{m_2}} - \frac{w_3^-(t_2)}{\sqrt{m_3}} > 0.
\end{align}
The velocity differences are invariant in between collision, i.e.
\begin{align}\label{invariant}
\begin{array}{rcl}
\dfrac{w_1^-(t_1)}{\sqrt{m_1}} - \dfrac{w_2^-(t_1)}{\sqrt{m_2}} &=& 
\dfrac{w_1^+(t_0)}{\sqrt{m_1}} - \dfrac{w_2^+(t_0)}{\sqrt{m_2}},\\[0.3cm]
\dfrac{w_2^-(t_2)}{\sqrt{m_2}} - \dfrac{w_3^-(t_2)}{\sqrt{m_3}} &=& 
\dfrac{w_2^+(t_1)}{\sqrt{m_2}} - \dfrac{w_3^+(t_1)}{\sqrt{m_3}},
\end{array}
\end{align}
Due to Proposition \ref{4}, the quantities (\ref{difference}) are arbitrarily close to zero if and
only if the collisions with the respective faces are arbitrarily close to grazing ones.
The first collision with the face $W_x(h_1,g_3)$ is almost grazing if and only if 
the planar subspace $\mathsf{P}_{x([t_0,t_3])}$ (see (\ref{planar})) is almost perpendicular to 
the face $W_x(g_1,g_2)$, i.e. $x(t_3) \in W_x(g_1,g_2)$. But this contradicts the fact of 
the trajectory reaching the last face $W_x(h_3,g_2)$. Therefore, there exists $\psi_1 > 0$, such that 
for all $x(t_0) \in W_x(h_4,g_3)$:
\begin{align}
	\sphericalangle(\mathsf{P}_{x([t_0,t_3])},W_x(g_3,h_1)) > \psi_1.
\end{align}

The second collision with the face $W_x(h_1,h_3)$ is almost grazing if and only if 
$\mathsf{P}_{x([t_0,t_c])}$ is almost perpendicular to the face $W_x(g_2,g_3)$, i.e. 
$x(t_0) \in W_x(g_1,h_4)$. 
But this contradicts $x(t_0) \in W_x(h_4,g_3)$. Therefore, there exists $\psi_2 > 0$, such that for all 
$x(t_0) \in W_x(h_4,g_3)$:
\begin{align}
	\sphericalangle(\mathsf{P}_{x([t_0,t_3])},W_x(h_1,h_3)) > \psi_2.
\end{align}
Using the projection along the first generator (see (\ref{triangle}) and Figure 1) we conclude, that 
$\psi_1 = \psi_2 = \pi / 6$.

\textbf{Case II.} Let $t_1 < t_2 <t_3 < t_4$ be the collision times of the particle 
with the faces $W_x(h_1,g_3)$, $W_x(h_1,h_3)$, $W_x(h_1,g_2)$ and $W_x(h_2,g_2)$. 
It is sufficient to prove that either
\begin{align}\label{first}
\frac{w_1^-(t_1)}{\sqrt{m_1}} - \frac{w_2^-(t_1)}{\sqrt{m_2}}\quad \text{and}\quad 
\frac{w_2^-(t_2)}{\sqrt{m_2}} - \frac{w_3^-(t_2)}{\sqrt{m_3}}
\end{align}
or
\begin{align}\label{second}
\frac{w_2^-(t_2)}{\sqrt{m_2}} - \frac{w_3^-(t_2)}{\sqrt{m_3}}\quad \text{and}
\quad 
\frac{w_1^-(t_3)}{\sqrt{m_1}} - \frac{w_2^-(t_3)}{\sqrt{m_2}}
\end{align}
are uniformly bounded away from zero. 

In order to reach the last face $W_x(g_2,h_2)$, the quantity 
$w_2^-(t_2) / \sqrt{m_2} - w_3^-(t_2) / \sqrt{m_3}$ is always uniformly bounded 
away from zero. Otherwise, due to Proposition \ref{4},
$\mathsf{P}_{x([t_0,t_4])}$ would be perpendicular to the face $W_x(g_2,g_3)$ and, thus, 
never reach the last face $W_x(h_2,g_2)$.

Due to Proposition \ref{4}, $w_1^-(t_1) / \sqrt{m_1} - w_2^-(t_1) / \sqrt{m_2}$ is arbitrarily close 
to zero if and only if the planar subspace $\mathsf{P}_{x([t_0,t_4])}$ is 
almost perpendicular to the face $W_x(g_1,g_2)$. But this implies that 
$w_1^-(t_3) / \sqrt{m_1} - w_2^-(t_3) / \sqrt{m_2}$ is uniformly bounded away from zero.

If $w_1^-(t_3) / \sqrt{m_1} - w_2^-(t_3) / \sqrt{m_2}$ is arbitrarily 
close to zero, then by the same reasoning as above, 
$w_1^-(t_1) / \sqrt{m_1} - w_2^-(t_1) / \sqrt{m_2}$ is uniformly bounded away from zero.
Thus, in case II., either (\ref{first}) or (\ref{second}) are always uniformly 
bounded away from zero.

It is clear, due to the coordinate transformation (\ref{localcoordinates}), that 
$w_i / \sqrt{m_i} - w_{i+1} / \sqrt{m_{i+1}}$ is uniformly bounded from below if and only if 
$v_i - v_{i+1}$ is uniformly bounded from below.

To finish the proof, consider the FB system in $x = (\xi,\eta)$ coordinates.
Along every orbit $(T^nx)_{n \in \mathbb{N}}$ we have obtained two subsequences
$(T^{n_{2k}}x)_{k \in \mathbb{N}}$ and $(T^{n_{2k+1}}x)_{k \in \mathbb{N}}$, where we set
$(T^{n_{2k}}x)_{k \in \mathbb{N}}$ to be the phase points before- and 
$(T^{n_{2k+1}}x)_{k \in \mathbb{N}}$ right after, two consecutive collisions with velocity differences
bounded away from zero. This means, that the derivative map $dT^{n_{2k-1}-n_{2k-2}}$ equals either 
$d\Phi_{2,3}d\Phi_{1,2}$ or $d\Phi_{1,2}d\Phi_{2,3}$. Both of the latter maps are upper triangular 
matrices of the form 
\begin{align}
\begin{pmatrix}
X_1 & X_2\\
0 & X_1^T
\end{pmatrix}.\nonumber
\end{align}
$X_1$ depends only on the masses, while $X_2 = X_2(\alpha_1,\alpha_2)$ depends on the masses 
and the velocity differences $v_i - v_{i+1}$ in $\alpha_1,\alpha_2$ (see (\ref{alpha})).
Each pair of consecutive collisions with velocity differences bounded away from 
zero belongs to one of the cases I-IV (\ref{cases}). Each of these velocity differences has a 
uniform lower bound. Set the minimum of these lower bounds to be $\Theta > 0$. Observe, that
\begin{align}
Q(d_{T^{n_{2k-2}}x}T^{n_{2k-1}-n_{2k-2}}(0,\delta \eta)) = 
\langle X_2\frac{1}{\|(0,\delta \eta)\|}\delta \eta, 
X_1^T \frac{1}{\|(0,\delta \eta)\|} \delta \eta \rangle
\|(0,\delta \eta)\|^2.\nonumber
\end{align}
Let $X_2(\Theta)$ be the matrix in which the velocity differences in $X_2(\alpha_1,\alpha_2)$ are 
replaced by $\Theta$. Since $X_2(\alpha_1,\alpha_2) - X_2(\Theta) > 0$, we have
\begin{align}
\langle X_2(\alpha_1,\alpha_2)\frac{1}{\|(0,\delta \eta)\|}\delta \eta, 
X_1^T \frac{1}{\|(0,\delta \eta)\|} \delta \eta \rangle
> \langle X_2(\Theta)\frac{1}{\|(0,\delta \eta)\|}\delta \eta, 
X_1^T \frac{1}{\|(0,\delta \eta)\|} \delta \eta \rangle.\nonumber
\end{align}
The functional $f(u) = \langle X_2(\Theta)u, X_1^Tu\rangle$ is positive, independent 
of $x$ and continuous on the compact space $\partial B_{\|\cdot\|}(0,1)$. Thus, there exists a 
constant $\Lambda > 0$ such that
\begin{align}
Q(d_{T^{n_{2k-2}}x}T^{n_{2k-1}-n_{2k-2}}(0,\delta \eta)) > \Lambda \|(0,\delta \eta)\|^2.\nonumber
\end{align}
This finishes the proof of Theorem \ref{thm2} and therefore also Theorem \ref{thm1}. 
\end{proof}

As it was outlined in Section 2, the strict unboundedness for every orbit subsequently implies the 
Chernov-Sinai ansatz and the abundance of least expanding points.


\begin{thebibliography}{30}

	\bibitem[ChW91]{ChW91} Cheng, J., Wojtkowski, M.; \textit{Linear stability of a periodic orbit in the system of falling
		balls}, appeared in The Geometry of Hamiltonian Systems (ed. Tudor Ratiu), MSRI Publications, Springer Verlag, 53-71 (1991)
	
	
	\bibitem[Ch93]{Ch93} Chernov, N.; \textit{Local ergodicity in hyperbolic systems with 
		singularities}, Functional Analysis \& Applications, 27:1 (1993)
	
	\bibitem[ChM06]{CM06} Chernov, N., Markarian, R.; \textit{Chaotic Billiards}, 
	AMS Mathematical Surveys and Monographs, vol. 127 (2006)
	
	\bibitem[ChS87]{CS87} Chernov, N., Sinai, Y.; \textit{Ergodic properties of some
		systems of two-dimensional discs and three-dimensional spheres}, Russ.
	Math. Surv. 42:3, 181-207 (1987)
	
	\bibitem[KS86]{KS86} Katok, A., Strelcyn, J.-M.; \textit{Invariant manifolds,
		entropy and billiards; smooth maps with singularities}, Lecture Notes in
	Mathematics, vol. 1222, Springer-Verlag (1986)
	
	
	\bibitem[KSSz90]{KSSz90} Kr\'amli, A., Sim\'anyi, N., Sz\'asz, D.; \textit{A transversal fundamental theorem
		for semi-dispersing billiards}, Communications in Mathematical Physics, Volume 129, Issue 3, 535–560 (1990)
	

	\bibitem[LW92]{LW92} Liverani, C., Wojtkowski, M.; \textit{Ergodicity of
		Hamiltonian systems}, Dynamics Reported 27:1, 130-202 (1995)
	
	
	\bibitem[S96]{S96} Sim\'anyi, N.; \textit{The characteristic exponents of the system of falling balls},
	Communications in Mathematical Physics, Volume 182, Issue 2, 457-468 (1996)
	

	\bibitem[W88]{W88} Wojtkowski, M.; \textit{Measure theoretic entropy of the system of hard spheres},
	Ergodic Theory and Dynamical Systems, 8, 133-153 (1988)
	
	\bibitem[W90a]{W90a} Wojtkowski, M.; \textit{A system of one dimensional balls with gravity},
	Communications in Mathematical Physics, Volume 126, Issue 3, 507-533 (1990)
	
	\bibitem[W90b]{W90b} Wojtkowski, M.; \textit{The system of one-dimensional balls
		in an external field. II}, Communications in Mathematical Physics, Volume 127, Issue 2, 425-432 (1990)
		
	\bibitem[W91]{W91} Wojtkowski, M.; \textit{Systems of classical interacting particles with 
	nonvanishing Lyapunov exponents}, in: Lyapunov Exponents (Oberwolfach 1990), L. Arnold, H. Crauel 
	and J.-P. Eckmann (eds.), Lecture Notes in Math. 1486, Springer, 243-262 (1991)

	\bibitem[W98]{W98} Wojtkowski, M.; \textit{Hamiltonian systems with linear potential and elastic constraints}, Fundamenta Mathematicae, Volume 157, 305-341 (1998)
	
	\bibitem[W16]{W16} Wojtkowski, M.; \textit{Hyperbolic Dynamics and Statistical Physics - conference dedicated to the 75th birthday of Domokos Sz\'asz}, Talk given on the 21st of May 2016 - \href{https://math.bme.hu/~szasz75/BLT_Szasz75.pdf}{Conference Programme} (2016)
\end{thebibliography}
\end{document}